\documentclass[preprint,3p]{elsarticle}




\usepackage{amssymb}
\usepackage{amsmath,amsthm,amscd}
\usepackage{amsfonts,mathrsfs}
\usepackage{graphicx,color,caption}
\usepackage{float,setspace,subfigure}

\pagestyle{myheadings}
\usepackage{enumerate,amsmath,amsthm,amsfonts}
\usepackage{amsmath}
\usepackage{amsfonts, relsize}
\usepackage{amssymb, accents, xcolor}
\usepackage{epsfig}
\usepackage{graphicx,color}
\usepackage{graphpap,color}
\usepackage{graphicx}
\usepackage{latexsym}
\usepackage{float}
\usepackage{setspace}

\newtheorem{theorem}{Theorem}[section]
\newdefinition{definition}[theorem]{Definition}
\newtheorem{corollary}[theorem]{Corollary}
\newtheorem{proposition}[theorem]{Proposition}
\newtheorem{conjecture}[theorem]{Conjecture}
\newtheorem{problem}[theorem]{Problem}

\newdefinition{example}[theorem]{Example}
\newdefinition{remark}[theorem]{Remark}
\numberwithin{equation}{section}






\begin{document}

\begin{frontmatter}



\title{Quasi-($\lambda,n $)-distance-balanced graphs
}


\cortext[footnoteinfo]{Corresponding author}

 \author[LNU,IUST]{Ehsan Pourhadi\corref{footnoteinfo}}
\ead{epourhadi@alumni.iust.ac.ir}

\author[PNU]{Morteza Faghani }
\ead{m$_{-}$faghani@pnu.ac.ir}

\address[LNU]{International Center for Mathematical Modelling in Physics and Cognitive Sciences MSI, Linnaeus University, SE-351 95, V\"{a}xj\"{o}, Sweden}
 \address[IUST]{Department of Mathematics, Iran University of Science and Technology, Narmak, Tehran 16846-13114, Iran}

\address[PNU]{Department of Mathematics, Payame Noor University, P.O.Box 19395-3697, Tehran, Iran}


\begin{abstract}
	For every pair of vertices $u$ and $v$ with $ d(u,v)=n $, $ W_{u\underline{n}v}^{G} $ 
	denotes the set of all vertices of $ G $ that are closer to $ u $ than to $ v $.
		A graph $G$ is said to be quasi-($\lambda,n $)-distance-balanced if $ | W_{u\underline{n}v}^{G}|=\lambda^{\pm1}|W_{v\underline{n}u}^{G} | $, for some positive 
	rational number $ \lambda > 1. $ In this paper, we study some properties of these graphs and present a formula to construct such graphs for arbitrarily diameter $ d $.
	For $ n=1 $, this class of graphs contains the quasi-$\lambda$-DB graphs recently introduced by Abedi et al.  [Quasi-$\lambda $-distance-balanced graphs, 
	Discrete Appl. Math. 227 (2017) 21--28]. 
Moreover, we will take a look at the problems arisen by  Abedi et al. Some problems and a conjecture are involved. 
\end{abstract}

\begin{keyword}
Distance-balanced graph \sep quasi-$\lambda $-distance-balanced graph \sep bipartite graph.

\MSC[2010] 05C12

\end{keyword}

\end{frontmatter}


\section{Introduction}
\label{S1}

It is well-known that in graph theory, the distance-balanced graphs are considered as 
one of the important class of graphs (see \cite{R3},\cite{R4},\cite{p},\cite{h},\cite{c}-\cite{n} and the references therein). The significance of these graphs is evident from
their applications in various areas, especially theoretical computer science (more precisely, balance in communication networks), 
and molecular analysis in chemical studies. 
One of the motivations of distance-balanced property is its application in partitioning the network topology into two equal pieces of nodes, the halves may have a very different structure, in particular their metric properties can be very different. If we have an option to design a network in advance (say, in the situation when two parties are competing in a common market with an objective to minimize the cost of transport between all its nodes, it seems fair to design a network in such a way that neither of the involved parties has an advantage to the other). In another word, structuring the distance-balanced graphs brings us the fairness in distribution of benefits.  
\\

Let $G$ be a finite, undirected and connected graph with diameter $d$, and let $V(G)$ and $E(G)$ indicate the vertex set and the edge set of $G$, respectively. For $a,b\in V(G)$, let $d_{G}(a,b)$ (or simply $ d(a,b) $) stand for the minimal path-length distance between $a$ and $b$. For any pair of vertices $a,b$ of $G$ with $ d(a,b)=n $, we denote
\begin{eqnarray*}
	W_{a\underline{n}b}^{G}=\{x\in V(G) \hskip 1mm | \hskip 1mm d(x,a)<d(x,b)\},
\end{eqnarray*}
and 
\begin{eqnarray*}
\underaccent{\mathlarger{a} \ \underline{\mathlarger{n}} \ \mathlarger{b}}{W}^{G}=\{x\in V(G)\hskip 1mm | \hskip 1mm d(x,a)=d(x,b)\}.
\end{eqnarray*}
In 2017, Abedi et al. \cite{R1} presented a class of graphs, so-called \textit{quasi-$\lambda$-distance-balanced} (DB) graphs, 
in which either
	$ \vert W_{ab}^{G}\vert=\lambda\vert W_{ba}^{G} \vert$ or 
$ \vert W_{ba}^{G}\vert=\lambda\vert W_{ab}^{G} \vert$, for some positive
rational number $ \lambda>1 $. Here, $ W_{ab}^{G}=W_{a\underline{1}b}^{G} $.
\\

The study of quasi-$\lambda$-DB graphs is only beginning (\cite{R1}, \cite{R2}).
Inspired by the notion of quasi-$\lambda$-DB graph together with the $n$-distance-balanced property introduced by 
Faghani, Pourhadi and Kharazi \cite{s} we present a new class of graphs as follows.
\begin{definition}\label{def1}
	A graph $ G $ is called \textit{quasi-($\lambda,n $)-distance-balanced} (for short, quasi-($\lambda,n $)-DB) if for each $a,b\in V(G)$ with $ d(a,b)=n $ we have either
	$ \vert W_{a\underline{n}b}^{G}\vert=\lambda\vert W_{b\underline{n}a}^{G} \vert$ or 
		$ \vert W_{b\underline{n}a}^{G}\vert=\lambda\vert W_{a\underline{n}b}^{G} \vert$, for some positive
		rational number $ \lambda>1 $. 
\end{definition}

For $ n=\lambda =1 $ the graph $ G $ is simply called \textit{distance-balanced}, which was initially introduced by Jerebic et al. \cite{a} and for $ n=1 $, $ G $ is called quasi-$\lambda $-distance-balanced graph defined by Abedi et al. \cite{R1}.
\\

The  paper is organized as follows. In the next section, 
 we investigate the quasi-$\lambda $-distance-balanced graphs and reveal some related facts, and then we  focus the problems recently arisen in Abedi et al. \cite{R1}. In Section 3, we initially introduce a new class of graphs which generalizes the quasi-$\lambda $-distance-balanced graphs and then present some result and a method to structure concerning with these graphs. Furthermore,  some problems and a conjecture for the further studies are included.



\section{Some facts of quasi-$\lambda $-distance-balanced graphs}
\label{S3}

In 2017, Abedi et al.  \cite{R1} introduced the notion of quasi-$\lambda $-distance-balanced graph which is the special case of 
quasi-($\lambda,n $)-distance-balanced graph by setting $ n=1 $. 
Since all examples of quasi-$\lambda $-DB graphs known to the authors are bipartite graphs, they arose the following natural question:

\begin{problem}[{\cite{R1}}]
	Does there exist a non-bipartite quasi-$\lambda $-DB graph?
\end{problem}
	In the following, we give the negative response for the above problem.
	
	\begin{theorem}
		If $ G $ is a connected quasi-$\lambda $-distance-balanced graph, then $ G $ is bipartite. 
	\end{theorem}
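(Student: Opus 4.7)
The approach is contradiction: assume $G$ is connected, quasi-$\lambda$-distance-balanced with $\lambda>1$, and not bipartite, and combine a transmission identity with a parity analysis on a shortest odd cycle. First set $\phi(v):=\sum_{x\in V(G)}d(x,v)$. Since $|d(x,u)-d(x,v)|\le 1$ on every edge $uv$, a direct sign count gives $\phi(v)-\phi(u)=|W_{uv}^{G}|-|W_{vu}^{G}|$; summing around any closed walk yields $\sum_i\bigl(|W_{v_iv_{i+1}}^{G}|-|W_{v_{i+1}v_i}^{G}|\bigr)=0$. Because $\lambda>1$ excludes $|W_{uv}^{G}|=|W_{vu}^{G}|$ on any edge, writing $\lambda=p/q$ in lowest terms forces $\{|W_{uv}^{G}|,|W_{vu}^{G}|\}=\{pm_e,qm_e\}$ with $m_e\in\mathbb{Z}_{>0}$, so the signed difference equals $\pm(p-q)m_e$ and the number of vertices equidistant from $u$ and $v$ equals $|V(G)|-(p+q)m_e$.

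Next, because $G$ is not bipartite I take a shortest odd cycle $C=v_0 v_1 \cdots v_{2k} v_0$ of length $2k+1$; a standard minimality argument forces $C$ to be isometric in $G$. The key combinatorial fact is that for every vertex $x\in V(G)$ the function $i\mapsto d(x,v_i)$ is $1$-Lipschitz on $\mathbb{Z}/(2k+1)\mathbb{Z}$ and has zero total increment, so the number of indices $i$ at which $d(x,v_i)=d(x,v_{i+1})$ is necessarily odd; in particular the antipodal vertex $v_{i+k+1}$ is always equidistant from the endpoints of each edge of $C$. Summing this oddness over all $x\in V(G)$ gives the parity identity $\sum_{i=0}^{2k}\bigl|\{x:d(x,v_i)=d(x,v_{i+1})\}\bigr|\equiv|V(G)|\pmod 2$.

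Finally, I would derive the contradiction by playing three constraints against each other on the $2k+1$ edges of $C$: the signed zero-sum $\sum_i\epsilon_i m_i=0$ with $\epsilon_i\in\{\pm1\}$ from the transmission identity, the rigid formula $|\{x:d(x,v_i)=d(x,v_{i+1})\}|=|V(G)|-(p+q)m_i\ge 1$ on each edge of $C$, and the parity identity above. The main obstacle is this final arithmetic step: isolating the precise incompatibility requires a case split on the parities of $p+q$ and $|V(G)|$, exploiting that the signed zero-sum with $2k+1$ positive-integer summands forces $\sum m_i$ to be even while the combined parity and divisibility constraints produced by the odd cycle force a residue inconsistent with this, thereby ruling out the existence of $C$ and completing the contradiction.
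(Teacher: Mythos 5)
Your preparatory lemmas are all correct, and your route (the additive transmission identity $\phi(v)-\phi(u)=|W_{uv}^G|-|W_{vu}^G|$ summed around the odd cycle) is genuinely different from the paper's, but the proof does not close: the contradiction promised in your last paragraph is not there. Concretely, your three constraints on the shortest odd cycle are (i) $\sum_{i=0}^{2k}\epsilon_i m_i=0$ with $\epsilon_i\in\{\pm1\}$ and $m_i\in\mathbb{Z}_{>0}$, (ii) $|\{x:d(x,v_i)=d(x,v_{i+1})\}|=|V(G)|-(p+q)m_i$, and (iii) $\sum_{i=0}^{2k}|\{x:d(x,v_i)=d(x,v_{i+1})\}|\equiv|V(G)|\pmod 2$. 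Reducing (i) mod $2$ gives $\sum_i m_i\equiv\sum_i\epsilon_i m_i=0$, so $\sum_i m_i$ is even. Substituting (ii) into (iii) gives $(2k+1)|V(G)|-(p+q)\sum_i m_i\equiv|V(G)|\pmod 2$, i.e.\ $(p+q)\sum_i m_i\equiv 0\pmod 2$, which is \emph{implied} by the evenness of $\sum_i m_i$ (and is vacuous anyway when $p+q$ is even). So the three constraints are mutually consistent for every choice of parities of $p+q$ and $|V(G)|$, and no case split on those parities can extract a contradiction; note also that $2-1-1=0$ shows (i) alone, with an odd number of positive summands, is perfectly satisfiable. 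A new structural input is needed, not more bookkeeping.

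The paper argues multiplicatively rather than additively: for each edge $v_iv_{i+1}$ of the odd cycle it writes $|W_{v_iv_{i+1}}^G|=\lambda^{e_i}|W_{v_{i+1}v_i}^G|$ with $e_i\in\{\pm1\}$, and arranges (via a decomposition of the $W$-sets into the classes $A_{ij}$) that the quantity attached to $v_i$ is the same for both cycle edges incident to $v_i$, so that the product of the $2l+1$ relations telescopes to $\lambda^{\sum_i e_i}=1$; since $\lambda>1$ this forces $\sum_{i=0}^{2l}e_i=0$, impossible for an odd number of terms in $\{\pm1\}$. There the parity of the odd cycle acts directly on the exponents $e_i$, whereas in your additive version it acts only on the $m_i$, where it is too weak. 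To salvage your approach you would need to show that the multipliers $m_i$ are not free --- for instance that they are all equal (which is in essence what the paper's telescoping identification $|W_{v_iv_{i+1}}^G|=\sum_j a_{ij}+2=|W_{v_iv_{i-1}}^G|$ provides), at which point $\sum_i\epsilon_i m_i=0$ with $2k+1$ equal positive terms is indeed impossible.
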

	 
	 \begin{proof}
	 	Inspired by the proof of \cite[Theorem 1.3]{R1}, let  $ G $ be  a quasi-$\lambda $-DB graph with $ d=\textrm{diam}(G) $, 
	 	and the vertex set $ \{v_{1}, v_{2}, \ldots, v_{2l+1} \} $ form an odd  circle with length $ 2l+1 $ such that $ v_{i}v_{i+1}\in E(G) $ and 	 	
$$A_{ij}=\bigg\{v\in V(G) \ | \ d(v,v_{i+k})=m_{jk}, \ m_{jk}=\{ 1,2, \ldots, d\},k=0,1,\ldots, 2l \bigg\}, \quad 2\leq j \leq r $$	
such that
$$ W_{v_{i}v_{i+1}}^{G}=\bigg(\bigcup_{j=1}^{r} A_{ij}\bigg)\cup \{ v_{i}, v_{i+2l}\} 
\quad W_{v_{i+1}v_{i}}^{G}=\bigg(\bigcup_{j=1}^{r} A_{(i+1)j}\bigg)\cup \{ v_{i+1}, v_{i+2}\} $$ 	
	 	where the calculations in indexes $ i $ are performed modulo $ 2l+1 $ and some $ r\in \mathbb{N} $. 
		 	Taking $ |A_{ij} |=a_{ij} $ for $ i=0,1,\ldots, 2l $ and $ j=1,2,\ldots, r $ and following the hypothesis there exist 
		 	$ e_{i}\in\{\pm1 \} $, $ i= 0,1,\ldots, 2l$, such that 
	\begin{eqnarray*}
	 &\sum_{j=1}^{r}a_{0j} +2 =\lambda^{e_{0}}\bigg(\sum_{j=1}^{r}a_{1j} +2\bigg),
	 \\& \sum_{j=1}^{r}a_{1j} +2 =\lambda^{e_{1}}\bigg(\sum_{j=1}^{r}a_{2j} +2\bigg),
	 \\& \vdots 
	 \\& \sum_{j=1}^{r}a_{(2l-1)j} +2 =\lambda^{e_{2l-1}}\bigg(\sum_{j=1}^{r}a_{(2l)j} +2\bigg),
	  \\& \sum_{j=1}^{r}a_{(2l)j} +2 =\lambda^{e_{2l}}\bigg(\sum_{j=1}^{r}a_{0j} +2\bigg).
	\end{eqnarray*}	 	
		 	Now, multiplying all $ (2l+1) $ equations above implies that $ \lambda^{\Sigma_{i=0}^{2l}e_{i}}=1 $, that is, 
		 	$ \Sigma_{i=0}^{2l} e_{i}=0 $. 
		 	On the other hand,  
		 	 $$ e_{i}\in\{\pm1 \}  \quad \Longrightarrow  \quad  1\leq |\Sigma_{i=0}^{2l} e_{i}|$$
		 	 which is a contradiction and hence $ G $ has no odd circle. This completes the proof.
	 \end{proof}
	 
	\begin{remark}\label{rem1}
	A bipartite graph has unbalanced parity if the two vertex sets are not the same size. 
	To describe more,	such a graph cannot be Hamiltonian, because a Hamilton circuit must alternate between the two vertex sets.
	\end{remark}

	\begin{remark}\label{rem2}
From Theorem 1.5 in \cite{R2} if $ G $ is a connected quasi-$\lambda $-distance-balanced graph with $ \delta(G)>1 $ then it is 2-connected 
and only stars are the quasi-$\lambda $-distance-balanced graphs with bridge. Moreover, stars are the only connected quasi-$\lambda $-distance-balanced graphs with $ \delta(G)=1 $. 
	\end{remark}
\subsection{Local operations} 
In this section we consider local operations on graphs and establish that they typically 
demolish the  quasi-$\lambda $-distance-balanced property.

\begin{theorem}\label{thm2.1}
If $ G $ is a connected quasi-$\lambda $-distance-balanced graph with $ \delta(G)>1 $, then for any adjacent edges $ e_{1}, e_{2}\in E(G) $ either $ G- e_{1} $ or $ G- e_{2} $ is not quasi-$\lambda $-distance-balanced graph. 
\end{theorem}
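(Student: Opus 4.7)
The plan is a proof by contradiction. Suppose that both $G - e_1$ and $G - e_2$ are quasi-$\lambda$-distance-balanced. Since $e_1$ and $e_2$ are adjacent, they share a common endpoint $v$, so I may write $e_1 = uv$ and $e_2 = vw$ with $u\neq w$. By Remark~\ref{rem2} the hypothesis $\delta(G)>1$ forces $G$ to be $2$-connected, hence $G-e_1$ and $G-e_2$ are connected. Applying the preceding bipartiteness theorem to each of $G$, $G-e_1$ and $G-e_2$ shows that all three graphs are bipartite; in particular, under the bipartition of $G$, the vertices $u$ and $w$ lie in the same part and $d_G(u,w)=2$ via the common neighbour $v$.

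The next step is to compare, edge by edge, the sets $W_{xy}^{G}$ with $W_{xy}^{G-e_i}$ for $i=1,2$. In $G-e_1$ the edge $vw$ is still present, so the quasi-DB assumption on $G-e_1$ yields $|W_{vw}^{G-e_1}|=\lambda_1^{\pm1}|W_{wv}^{G-e_1}|$ for some $\lambda_1>1$, and the analogous relation holds in $G-e_2$ at the edge $uv$ for some $\lambda_2>1$. One can decompose each of these sets according to its intersection with $W_{uv}^{G}$, $W_{vu}^{G}$ and the $u$--$v$ equidistant set, since a vertex may switch side only when every shortest path from it to $v$ or $w$ in $G$ is forced through the deleted edge. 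Combining these decompositions with the original quasi-$\lambda$-DB relations for $G$ at the edges $uv$ and $vw$ should produce a multiplicative identity relating $\lambda$, $\lambda_1$ and $\lambda_2$.

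In the spirit of the odd-circle argument used in the bipartite theorem above, the aim is to reduce this identity to the form $\lambda^{\alpha}\lambda_1^{\beta}\lambda_2^{\gamma}=1$ with exponents $\alpha,\beta,\gamma\in\{\pm1\}$ whose signs cannot cancel; then at least one of $\lambda,\lambda_1,\lambda_2$ must equal $1$, contradicting the defining condition that the quasi-DB parameter strictly exceed $1$. The main obstacle I anticipate is the careful bookkeeping of ``boundary'' vertices whose shortest-path status changes after the deletion. In particular, one must verify that the vertices swapped between $W_{vw}$ and $W_{wv}$ in passing from $G$ to $G-e_1$ are captured exactly by a subset of $W_{uv}^{G}\cup W_{vu}^{G}$, with no leakage into the $u$--$v$ equidistant set, and symmetrically for $G-e_2$. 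Once this accounting is pinned down, a short algebraic manipulation of the three DB equations at $uv$ and $vw$ closes the argument.
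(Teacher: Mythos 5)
Your proposal is a plan rather than a proof, and its central step does not go through. The object everything hinges on --- an exact multiplicative identity $\lambda^{\alpha}\lambda_1^{\beta}\lambda_2^{\gamma}=1$ obtained by chaining the quasi-DB relations of $G$, $G-e_1$ and $G-e_2$ --- is never derived, and there is a structural reason it cannot be derived in the way you describe. In the odd-cycle argument you are imitating, the telescoping works because each link of the chain is an exact equality $x_i=\lambda^{e_i}x_{i+1}$ and the chain closes into a cycle. Here the relation between $|W_{xy}^{G}|$ and $|W_{xy}^{G-e_i}|$ is not an equality but an inclusion with a strict excess: deleting an edge only increases distances, and the honest statement one can prove (for a suitably chosen vertex) has the form $|W_{\cdot\cdot}^{G-e_1}|\ge|W_{\cdot\cdot}^{G}|+1$. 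Additive perturbations of this kind do not telescope multiplicatively, so no identity of the advertised shape emerges from your decomposition. Moreover, even granting such an identity, your closing inference is wrong as stated: from $\lambda^{\alpha}\lambda_1^{\beta}\lambda_2^{\gamma}=1$ with independent $\lambda,\lambda_1,\lambda_2>1$ nothing forces any of them to equal $1$ (take $\lambda=4$, $\lambda_1=\lambda_2=2$ with exponents $1,-1,-1$). The theorem fixes a single $\lambda$ for all three graphs, and the argument must use that.

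The paper's proof exploits the additive excess directly rather than seeking a multiplicative cancellation. Writing $e_1=ab$, $e_2=ac$ and choosing $c$ on a shortest $a$--$b$ path in $H_1=G-e_1$ (possible since $\delta(G)>1$ forces $2$-connectedness by Remark~\ref{rem2}), one shows $W_{ca}^{G}\subseteq W_{ca}^{H_1}$ while the vertex $b$ moves from $W_{ac}^{G}$ into $W_{ca}^{H_1}$, giving the key inequality $|W_{ca}^{H_1}|\ge|W_{ca}^{G}|+1$. Feeding this into the quasi-$\lambda$ relations of $G$ and $H_1$ at the edge $ac$ produces impossibilities such as $|W_{ca}^{H_1}|\ge\lambda^{2}|W_{ca}^{H_1}|+1$ with $\lambda>1$, and the remaining configuration is ruled out by running the same argument on $e_2$ and $H_2=G-e_2$. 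If you want to salvage your approach, you must replace the hoped-for exact identity by these one-sided inequalities and argue numerically --- which is precisely the paper's route.
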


\begin{proof}
Let  $ e_{1}=ab, e_{2}=ac$ be adjacent edges in $ G  $,    
 without loss of generality and using Remark \ref{rem2} let $ c $ belong  to $ P_{1} $ as the shortest path connecting $ a $ to $ b $ in $ H_{1}=G-e_{1}. $
Suppose that $ x\in  W_{ca}^{G}$. Following the fact that $ e_{1} $ does not lie on any shortest
$(x,c)$-path in $ G $ we get $ d_{H_{1}}(x,c)=d_{G}(x,c) $. This implies that 
\begin{eqnarray*}
 d_{H_{1}}(x,a) \geq d_{G}(x,a)> d_{G}(x,c)= d_{H_{1}}(x,c)
\end{eqnarray*}
which shows that $ x\in W_{ca}^{H_{1}} $, that is, $ W_{ca}^{G}\subseteq W_{ca}^{H_{1}} $.
On the other hand, $ b\in W_{ac}^{G}\  \cup \ 
\underaccent{\mathlarger{a} \  \ \mathlarger{c}}{W}^{G}=W_{ac}^{G} $ and $ b\in  W_{ca}^{H_{1}}  $ 
which yields 
\begin{eqnarray}\label{eq2.1}
|W_{ca}^{H_{1}}| \geq |W_{ca}^{G} |+1.
\end{eqnarray}
Since $ G $ is a quasi-$\lambda $-distance-balanced graph we consider the following cases:
\begin{description}
	\item[Case 1.]  
  If $ |W_{ca}^{G} | =\lambda |W_{ac}^{G} | $ then using \eqref{eq2.1} we obtain
  $$ |W_{ca}^{H_{1}}| \geq |W_{ca}^{G} |+1 =\lambda |W_{ac}^{G} | +1 \geq \lambda |W_{ac}^{H_{1}} | +1 > \lambda |W_{ac}^{H_{1}} |.   $$

Hence, $ |W_{ca}^{H_{1}}| \neq \lambda |W_{ac}^{H_{1}} | $. Furthermore, if $ |W_{ac}^{H_{1}} | = \lambda |W_{ca}^{H_{1}}|  $ then we have
 $$ |W_{ca}^{H_{1}}| \geq |W_{ca}^{G} |+1 =\lambda |W_{ac}^{G} | +1 \geq \lambda |W_{ac}^{H_{1}} | +1 = 
 \lambda^{2} |W_{ca}^{H_{1}} | +1   $$
 which is a contradiction. Therefore, $ H_{1} $ is not quasi-$\lambda $-distance-balanced. 
 	\item[Case 2.]
 Now suppose that $ |W_{ac}^{G} | =\lambda |W_{ca}^{G} | $. Obviously,  $|W_{ac}^{H_{1}}|\neq \lambda |W_{ca}^{H_{1}}|$ since
  $$ \lambda|W_{ca}^{H_{1}}| \geq \lambda |W_{ca}^{G} |+\lambda = |W_{ac}^{G} | +1 \geq |W_{ac}^{H_{1}} | +1 > |W_{ac}^{H_{1}}|.$$
 For this case if $ H_{1} $ is not quasi-$\lambda $-distance-balanced, then the proof is complete, otherwise, let us consider $  |W_{ca}^{H_{1}}| =\lambda |W_{ac}^{H_{1}}| $. 
 \\
 
\item[Case 2.1.] 
If $  |W_{ba}^{G} | = \lambda |W_{ab}^{G} |$, then
 by the same reasoning for the edge $ e_{2} $ and considering a path $ P_{2} $ 
 for $ H_{2}=G-e_{2} $ we arrive at some equalities similar to the ones above. That is, 
 $$  |W_{ab}^{G} | =\lambda |W_{ba}^{G} |, \quad |W_{ba}^{H_{2}}| =\lambda |W_{ab}^{H_{2}}| $$
 which is a contradiction since it would imply that $ |W_{ab}^{G} | = |W_{ba}^{G} |=\frac{|V(G) |}{\lambda +1 } $. 
 \\
 
 \item[Case 2.2.] 
 Now suppose that  $  |W_{ab}^{G} | = \lambda |W_{ba}^{G} |$, then considering the edge $ e_{3}=bd\in E(G) $ for $ a\neq d $ 
 and a path $ P_{3} $ for $ H_{3}=G-e_{3} $  we similarly derive that 
 $$  |W_{ba}^{G} | =\lambda |W_{ab}^{G} |, \quad |W_{ab}^{H_{3}}| =\lambda |W_{ba}^{H_{3}}| $$
 which shows that $ G $ is DB-graph and this is a contradiction.
\end{description}
This completes the proof. 
\end{proof}
Let us denote the complete graph and the cycle of order $ n $ by $ K_n $ and $ C_n $, respectively. The complement or inverse of a graph $ G $ is a graph $ \overline{G} $ on the same vertices such that two vertices of $ \overline{G} $ are adjacent if and only if they are not adjacent in $ G $.
\begin{corollary}
	Let graph $ G$ with $ \delta(G)>1 $ be given. Suppose that  $ G+e_{i} $ are quasi-$\lambda $-DB graphs for $ i=1,2 $ and $ e_{1}, e_{2} $ are two adjacent edges of $ \overline{G} $.
	Then $ G + e_{1} + e_{2} $ is not quasi-$\lambda $-DB.
\end{corollary}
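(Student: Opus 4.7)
The plan is to derive the corollary by direct contradiction, using Theorem~\ref{thm2.1} applied to the graph $H := G + e_1 + e_2$. The key observation is that $e_1$ and $e_2$, which are adjacent in $\overline{G}$ (so they share a common endpoint in the vertex set), become two adjacent edges of $H$, and deleting them recovers the hypothesized quasi-$\lambda$-DB graphs.

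First I would verify that Theorem~\ref{thm2.1} is applicable to $H$. Since $H$ is obtained from $G$ by only adding edges, every vertex degree can only increase, so $\delta(H) \geq \delta(G) > 1$. Moreover, $H$ is connected (as $G+e_1$ already is, and adding $e_2$ preserves connectivity). Assume, for the sake of contradiction, that $H$ is quasi-$\lambda$-distance-balanced.

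Next, I would observe that $e_1$ and $e_2$ are indeed adjacent edges of $H$: since they are adjacent in $\overline{G}$ they share a common vertex, and both lie in $E(H)$ by construction. Applying Theorem~\ref{thm2.1} to the pair $e_1,e_2 \in E(H)$ therefore gives that at least one of $H - e_1$ or $H - e_2$ fails to be quasi-$\lambda$-distance-balanced. But
\begin{equation*}
H - e_1 = G + e_2 \quad \text{and} \quad H - e_2 = G + e_1,
\end{equation*}
and both of these are quasi-$\lambda$-DB by hypothesis. This is the desired contradiction, so $H = G + e_1 + e_2$ cannot be quasi-$\lambda$-DB.

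There is essentially no obstacle here, since the argument is a clean reduction to Theorem~\ref{thm2.1}; the only mild points to check are the applicability conditions ($\delta(H) > 1$, connectedness of $H$, and that $e_1,e_2$ are genuinely adjacent in $H$), all of which follow immediately from the hypotheses. The corollary should therefore be stated and proved in a single short paragraph.
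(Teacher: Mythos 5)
Your argument is correct and is essentially identical to the paper's proof: set $H:=G+e_1+e_2$, assume it is quasi-$\lambda$-DB, apply Theorem~\ref{thm2.1} to the adjacent edges $e_1,e_2$ of $H$, and note that $H-e_1=G+e_2$ and $H-e_2=G+e_1$ are quasi-$\lambda$-DB by hypothesis, giving a contradiction. Your additional checks of the hypotheses of Theorem~\ref{thm2.1} (that $\delta(H)>1$, that $H$ is connected, and that $e_1,e_2$ are adjacent in $H$) are sensible and only make the argument more complete than the paper's.
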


\begin{proof}
Set  $ H:= G + e_{1} + e_{2} $. Suppose that $ H $ is  a quasi-$\lambda $-distance-balanced graph. 
Then by Theorem \ref{thm2.1}, 
either $ H_{1}:=H-e_{1}$ or $ H_{2}:=H-e_{2}$, is not quasi-$\lambda $-distance-balanced, 
which contradicts the hypothesis. 
\end{proof}
		
The join $ G + H $ of graphs $ G $ and $ H $ with disjoint vertex sets $ V_1 $ and $ V_2 $ 
and edge sets $ E_1 $ and $ E_2 $ is the graph union $ G \cup H $ together with all the edges joining $ V_1 $ and $ V_2 $. 	
Since  quasi-$\lambda $-distance-balanced graphs are triangle free it is obvious that $ G \cup H $  has no such property for any 
nontrivial graphs $ G $ and $ H $. Hence,  $ G \cup H $ is quasi-$\lambda $-distance-balanced if and only if 
 $ G $ and $ H $ are empty graphs, that is, $ G \cup H =K_{m,n} $ where $ m,n $ are the order of $ G $ and $ H $, and $ m\neq n $.
\\
		
For a vertex $u$ of $G$ the \textit{total distance} $D_{G}(u)$ of $u$ is $D_{G}(u) =\Sigma_{v\in V(G)} d_{G}(u, v).$
Whenever $ G $ will be clear from the context we will write $ d(u, v) $ and $ D(u) $ instead of $ d_{G}(u, v)$ and $ D_{G}(u) $, respectively.

\begin{proposition}
Suppose that $ G $ is a quasi-$\lambda $-distance-balanced graph, then for any $ u,v\in V(G) $ with $ d(u,v)=2 $ we have 
$ D(u)+D(v) $ is even. 
\end{proposition}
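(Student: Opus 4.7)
The plan is to reduce the claim to a parity statement about the distances $d(w,u)$ and $d(w,v)$ for every $w \in V(G)$, and then exploit bipartiteness, which is guaranteed by the theorem proved earlier in this section.

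First, I would observe that since $G$ is quasi-$\lambda$-distance-balanced, the preceding theorem guarantees $G$ is bipartite; let $A \sqcup B$ be its bipartition. Because $d(u,v) = 2$ is even, $u$ and $v$ necessarily lie in the same part, say both in $A$.

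Next, I would use the standard bipartite fact that in a bipartite graph, the parity of $d(w,x)$ is determined by whether $w$ and $x$ lie in the same part: $d(w,x)$ is even iff $w$ and $x$ are in the same part. Since $u$ and $v$ are both in $A$, for any vertex $w \in V(G)$ the two distances $d(w,u)$ and $d(w,v)$ have the same parity (both even when $w \in A$, both odd when $w \in B$). Hence $d(w,u) + d(w,v)$ is even for every $w \in V(G)$.

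Finally, I would conclude by summing over all vertices:
\begin{equation*}
D(u) + D(v) \;=\; \sum_{w \in V(G)} \bigl( d(w,u) + d(w,v) \bigr),
\end{equation*}
which is a sum of even integers and therefore even. There is no real obstacle here: the only nontrivial ingredient is bipartiteness, which is already supplied by the earlier theorem, after which the argument is a one-line parity observation.
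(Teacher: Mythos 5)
Your proof is correct, and it is worth noting that it is actually more complete than what the paper provides: the paper's own ``proof'' of this proposition consists of a single setup sentence (introducing a common neighbour $w$ of $u$ and $v$ with $d(u,w)=d(w,v)=1$) and then stops, so the intended argument there --- presumably tracking how $D(u)$, $D(w)$, $D(v)$ relate along the two edges of the path $u,w,v$ --- is never carried out. Your route is different and self-contained: you invoke the bipartiteness theorem proved earlier in the section, note that $d(u,v)=2$ forces $u$ and $v$ into the same part of the bipartition, use the standard fact that in a bipartite graph the parity of $d(w,x)$ depends only on whether $w$ and $x$ lie in the same part, conclude that $d(w,u)+d(w,v)$ is even for every $w$, and sum over $w$. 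Each of these steps is sound (connectivity of $G$ is assumed throughout the paper, so all distances are finite), and the argument in fact shows the slightly stronger statement that $D(u)+D(v)$ is even whenever $d(u,v)$ is even. What your approach buys is a finished proof resting only on the already-established bipartiteness; what the paper's sketched approach would buy, if completed, is an argument local to the path $u,w,v$ that avoids quoting the bipartiteness theorem, but as written it is only a fragment.
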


\begin{proof}
	Suppose that $ u,v, w\in V(G) $ with $ d(u,v)=2 $ and  $ d(u,w)= d(w,v)=1.$
\end{proof}
 
 \begin{theorem}[{\cite{R5}}]
 Let $ G $ be a connected graph. Then $ G $ is distance-balanced if and only if $ |\{ D(u): u\in V(G) \} | =1,$ 
 that is, $ G $ is transmission-regular.
 \end{theorem}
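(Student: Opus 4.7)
The plan is to reduce the equivalence to the single identity
\begin{equation*}
D(v) - D(u) \;=\; |W_{uv}^{G}| - |W_{vu}^{G}| \qquad \text{for every edge } uv \in E(G),
\end{equation*}
after which both implications follow directly from the definition of distance-balanced together with a short connectivity argument.

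First I would prove this identity. Fix an edge $uv$ and partition $V(G)$ into three classes: vertices closer to $u$, vertices closer to $v$, and vertices equidistant from $u$ and $v$. Because $u$ and $v$ are adjacent, the triangle inequality forces $|d(w,u) - d(w,v)| \leq 1$ for every $w \in V(G)$, so $d(w,v) - d(w,u)$ takes the value $+1$ on $W_{uv}^{G}$, the value $-1$ on $W_{vu}^{G}$, and the value $0$ on the equidistant set. Summing over all $w \in V(G)$ yields the claimed identity.

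Next I would deduce the two directions. If $G$ is distance-balanced, then $|W_{uv}^{G}| = |W_{vu}^{G}|$ for every edge $uv$, so the identity gives $D(u) = D(v)$ along every edge. Since $G$ is connected, any two vertices are joined by a path, along which $D$ is constant at each step; hence $|\{D(u) : u \in V(G)\}| = 1$. Conversely, if $D$ is constant on $V(G)$, then in particular $D(u) = D(v)$ whenever $uv$ is an edge, and the identity forces $|W_{uv}^{G}| = |W_{vu}^{G}|$ for every edge, so $G$ is distance-balanced.

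I do not anticipate a real obstacle: the argument hinges entirely on the observation that two adjacent vertices differ in distance to any third vertex by at most one, which converts a total-distance difference into a signed count of vertices. The only minor technicality is the passage from "constant on each edge" to "constant on $V(G)$," which is immediate from the connectedness of $G$.
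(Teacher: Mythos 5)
Your proof is correct. The paper itself gives no argument for this statement---it is quoted from \cite{R5}---but your route, namely the identity $D(v)-D(u)=|W_{uv}^{G}|-|W_{vu}^{G}|$ for every edge $uv$ (obtained by partitioning $V(G)$ according to the sign of $d(w,v)-d(w,u)\in\{-1,0,1\}$) followed by propagating $D(u)=D(v)$ along edges via connectedness, is exactly the standard proof of this result in that reference.
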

Since any quasi-$\lambda $-distance-balanced graph is bipartite, it seems we have only two values for $ | W_{uv}^{G}| $.
Using this together with the structure of known such graphs that we suspect the following is true. 
\begin{conjecture}\label{conj1}
If  $ G $ is quasi-$\lambda $-distance-balanced graph then $$ |\{ deg(u): u\in V(G) \} |= |\{ D(u): u\in V(G) \} | =2.$$
\end{conjecture}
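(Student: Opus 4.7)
The plan is to exploit bipartiteness (from the theorem above) to compress the quasi-$\lambda$-DB hypothesis into a two-valued structure, and then identify this structure with the bipartition. Since $G$ is bipartite, on any edge $uv$ no vertex is equidistant from $u$ and $v$, hence $|W_{uv}^{G}| + |W_{vu}^{G}| = n := |V(G)|$; combined with the quasi-$\lambda$-DB hypothesis this forces $\{|W_{uv}^{G}|, |W_{vu}^{G}|\} = \{p, \lambda p\}$ with $p := n/(\lambda+1)$ on every edge. Using the standard identity
\begin{equation*}
D(u) - D(v) = \sum_{x \in V(G)} \bigl(d(x,u) - d(x,v)\bigr) = |W_{vu}^{G}| - |W_{uv}^{G}|,
\end{equation*}
which holds on every edge of a bipartite graph (each summand being $\pm 1$), one gets $|D(u) - D(v)| = (\lambda-1)p =: \Delta$ on \emph{every} edge.

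I would then call $u$ \emph{heavy} on the edge $uv$ if $|W_{uv}^{G}| = \lambda p$, and \emph{light} otherwise, and try to establish that the type of a vertex is independent of the chosen incident edge: every vertex is heavy on all its incident edges or light on all of them. Granting this, the type partition is a proper $2$-coloring of $G$, which by connectedness and the uniqueness of the bipartition coincides with $V(G) = A \sqcup B$. The identity above then makes $D$ constant on each class, so $|\{D(u) : u \in V(G)\}| = 2$. The degree statement would follow from a further rigidity argument: the type-uniformity makes $|W_{uv}^{G}|$ depend only on the class of $u$, and I would convert this (by inducting on BFS layers from a fixed vertex and counting edges between consecutive layers) into the assertion that $G$ is semi-regular between $A$ and $B$, yielding $|\{\deg(u) : u \in V(G)\}| = 2$.

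The hard part is the type-consistency claim. Suppose a vertex $u$ has a heavy neighbor $v$ and a light neighbor $w$; the second-paragraph identity forces $D(v) - D(w) = 2\Delta$, while $v, w$ lie in the same bipartition class and are at mutual distance at most $2$. My plan to rule out such a configuration is to couple the preceding proposition (the parity constraint on $D(v) + D(w)$ via the common neighbor $u$) with a double-counting of the BFS layers rooted at $u$, tracking how vertices split between $W_{uv}^{G}, W_{vu}^{G}$ versus $W_{uw}^{G}, W_{wu}^{G}$. The goal is to show that the local imbalance around $u$ must propagate to a violation of the quasi-$\lambda$-DB condition on an edge further from $u$, or to an integrality contradiction for $p = n/(\lambda+1)$. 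This step, fusing the local structure of shortest paths with a global averaging, is where I expect the main difficulty to lie.
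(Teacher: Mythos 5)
The statement you are trying to prove is presented in the paper as a \emph{conjecture}: the authors offer no proof and explicitly say only that they ``suspect'' it based on the known examples, so there is no argument of theirs to compare yours against. Judged on its own terms, your proposal sets up the right framework but does not close either of the two essential steps. The reductions you do carry out are correct: bipartiteness gives $|W_{uv}^G|+|W_{vu}^G|=n$ on every edge, hence $\{|W_{uv}^G|,|W_{vu}^G|\}=\{p,\lambda p\}$ with $p=n/(\lambda+1)$, and the identity $D(u)-D(v)=|W_{vu}^G|-|W_{uv}^G|$ is valid since each summand $d(x,u)-d(x,v)$ is $\pm1$ across an edge of a bipartite graph. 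But this only tells you that $D$ is a graph homomorphism into the arithmetic progression $c+\Delta\mathbb{Z}$ with $\Delta=(\lambda-1)p$: writing $D(u)=c+\Delta g(u)$, the edge-local constraints say $|g(u)-g(v)|=1$ on every edge and nothing more, so a priori $g$ can take three or more values (a path $v$--$u$--$w$ with $g=0,1,2$ is exactly your heavy/light inconsistency). Your ``type-consistency'' lemma is therefore the entire content of the $D$-part of the conjecture, and your plan for it is only a goal statement (``the goal is to show\dots''), partly resting on Proposition 2.7, whose proof in the paper is itself left blank.

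Even granting type-consistency, the degree claim does not follow. Knowing that $|W_{uv}^G|$ depends only on the bipartition class of $u$ constrains the full distance distribution from $u$, not its first shell: two vertices in the same class can have equal $D$ and equal $|W|$-values while having different degrees, with the discrepancy at distance $1$ compensated at larger distances. Your proposed BFS-layer edge count is the right kind of tool, but as written it is an intention, not an argument, and it is not clear it terminates in biregularity without additional structural input (e.g.\ control of $|D_{i,j}^G(u,v)|$ for all $i,j$, not just the aggregate $W$-sets). So the proposal should be regarded as a plausible reduction of the conjecture to two sharper open claims rather than as a proof.
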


\subsection{Quasi-$\lambda $-DB and some graph products}

The \textit{corona product} $ G \circ H $ is obtained by taking one copy of $ G $ and $ |V(G)| $ copies of $ H $; and by joining each vertex of the $ i $-th copy of $ H $ to the $ i $th vertex of $ G $, $ i = 1, 2, \ldots, |V(G)|. $ 

\begin{theorem}
The corona product of two arbitrary, nontrivial graphs $ G $ and $ H $ 
is 	quasi-$\lambda $-DB if and only if $ G $  and $ H $ are empty graphs. 
Moreover, $ \lambda = |V(H)| $.
\end{theorem}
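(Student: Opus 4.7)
The strategy is to prove both directions directly.

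For \emph{sufficiency}, suppose $G$ and $H$ are both edgeless with $|V(G)|=n$ and $|V(H)|=m$. Then $G\circ H$ is a disjoint union of $n$ stars $K_{1,m}$, each centred at a vertex $g_i$ of $G$ whose leaves are the $m$ vertices of the $i$-th copy of $H$. Every edge of $G\circ H$ is a leaf edge $cl$; I would do the routine star computation to conclude $|W_{cl}^{G\circ H}|=m$ (the centre together with the other $m-1$ leaves) and $|W_{lc}^{G\circ H}|=1$, so every edge yields the same ratio $\lambda=m=|V(H)|$.

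For \emph{necessity}, assume $G\circ H$ is quasi-$\lambda$-DB. I would first rule out edges of $H$: each component of $G\circ H$ is itself quasi-$\lambda$-DB (the $W$-sets for an edge live inside its component), so by the bipartiteness theorem proved earlier in this section, each component is bipartite and hence triangle-free. Any edge $uv$ in a copy of $H$ would form a triangle with the corresponding vertex $g_i$ of $G$, a contradiction, so $H$ must be edgeless.

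Next I would suppose, for contradiction, that $G$ still contains an edge $g_ig_j$, and let $k\geq 2$ be the size of the component of $G$ containing $g_i,g_j$; the corresponding component $C$ of $G\circ H$ then has $k(m+1)$ vertices. My first key computation is on a pendant leaf $h$ attached to $g_i$: since $g_i$ is the only neighbour of $h$, every $x\in C\setminus\{h\}$ satisfies $d(x,h)=d(x,g_i)+1$, forcing $|W_{g_ih}^{G\circ H}|=k(m+1)-1$ and $|W_{hg_i}^{G\circ H}|=1$, so $\lambda=k(m+1)-1$. My second key computation is on the edge $g_ig_j$ itself: the $m$ leaves attached to $g_i$, together with $g_i$, all lie in $W_{g_ig_j}^{G\circ H}$, and symmetrically for $W_{g_jg_i}^{G\circ H}$, so each side has at least $m+1$ elements. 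Combining these lower bounds with the quasi-$\lambda$-DB equation at $\lambda=k(m+1)-1$ will force the sum $|W_{g_ig_j}^{G\circ H}|+|W_{g_jg_i}^{G\circ H}|$ to exceed the component size $k(m+1)$, a contradiction. Hence $G$ is edgeless, and substituting $k=1$ into the pendant-edge formula then yields $\lambda=m=|V(H)|$. The main obstacle I anticipate is precisely this reconciliation: the pendant-edge ratio and the internal-edge ratio must both equal the same $\lambda$, and it is the tension between these two constraints that rules out every edge of $G$.
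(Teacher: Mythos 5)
Your proof is correct, and its engine is the same as the paper's: the pendant (centre--leaf) edge forces $|W_{g_ih}|$ to be the whole component minus one vertex against $|W_{hg_i}|=1$, pinning $\lambda$ to be essentially the component order, after which no other edge can realize that ratio. You differ from the paper in two places. First, you dispose of the edges of $H$ by citing the bipartiteness (hence triangle-freeness) theorem from earlier in the section, whereas the paper computes directly that both $W$-sets of an edge inside a copy of $H$ are confined to that copy and hence too small to be $\lambda$ times anything; both work, and the paper's route also extracts along the way that $H$ would have to be regular. Second, for an edge $g_ig_j$ of $G$ you use the pigeonhole bound $|W_{g_ig_j}|+|W_{g_jg_i}|\le k(m+1)$ against the lower bounds $\ge m+1$ on each side, while the paper argues that both $W$-sets are strictly below $\lambda$; your inequality $(k(m+1)-1)(m+1)+(m+1)=k(m+1)^2>k(m+1)$ closes cleanly. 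Your version is in fact tidier on one point the paper glosses over: the paper opens by assuming $G$ and $H$ connected (incompatible with the conclusion that they are empty) and states $|W_{uv}^{G\circ H}|=|W_{uv}^G|+|V(H)|$ for $uv\in E(G)$ where the correct count is $|W_{uv}^G|\,(|V(H)|+1)$, whereas you work component by component and avoid both issues. The only cosmetic omission is noting that $|V(H)|\ge 2$ is needed so that the final $\lambda=m$ exceeds $1$, which is supplied by the nontriviality hypothesis.
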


\begin{proof}
Suppose that $ G $ and $ H $ are arbitrary, nontrivial and connected graphs, and let $ H_i $ be the $ i $-th copy of
$ H $, where $ i=1,2,\ldots, |V(G)|  $. Assume that $ G \circ H $ is quasi-$\lambda $-DB and $ uv \in E(G\circ H) $ such that $ u\in V(G) $ and $ v\in V(H_i) $. Hence, we get
\begin{eqnarray*}
	|W_{uv}^{G\circ H} | = \lambda |W_{vu}^{G\circ H} |=\lambda = |V(G)|(|V(H)|+1)-\textrm{deg}_{G\circ H}(v).
\end{eqnarray*}
 which implies that $ H $ must be regular. On the other hand if $ u v\in E(H_i) $ then
 \begin{eqnarray*}
 	|W_{uv}^{G\circ H} | = 	|W_{uv}^{H} |<\lambda, \quad   |W_{vu}^{G\circ H} | = |W_{vu}^{ H} |<\lambda
 \end{eqnarray*}
which is a contradiction unless $ H $ is an empty graph. Now suppose that $ u  v \in E(G) $ then 
\begin{eqnarray*}
	|W_{uv}^{G\circ H} | = 	|W_{uv}^{G} |+|V(H)| <\lambda, \quad   |W_{vu}^{G\circ H} | = |W_{vu}^{G} | +|V(H)|<\lambda
\end{eqnarray*}
which contradicts that $ G \circ H $ is a quasi-$\lambda $-DB graph, hence $ G $ is an empty graph. 
That is,  $ G \circ H $ is disconnected and formed by $ |V(G)| $ disjoint stars $ S_{k} $ where $ k=|V(H)|. $ 
The converse is obvious and so the consequence follows. 
\end{proof}

Very recently, a problem concerning with characterizing the quasi-$\lambda $-DB direct products 
has been arisen by Abedi et al. \cite{R1}.
Throughout this section, we present some facts regarding with quasi-$\lambda $-DB direct products 
which can be helpful in further investigations. Recall that two vertices $(u_{1}, u_{2}), (v_{1}, v_{2})\in  V(G)\times V(H)$  
are adjacent in $ G\times H $ when $ u_{1}v_{1}\in E(G) $ and $ u_{2}v_{2}\in E(H) $.

\begin{remark}
The graphs $G,H $ have a triangle, and more general, have an odd cycle with same type if and only if the direct product $ G\times H $ has. Therefore, for this case $ G\times H $ cannot be quasi-$\lambda $-distance-balanced.
\end{remark}

\begin{remark}
	If $ G\times H $ is connected then $ G $ or $ H $ is not quasi-$\lambda $-distance-balanced. Therefore, 
	there is no connected quasi-$\lambda $-distance-balanced graph $ G\times H $ while both $G $ and $H $ are quasi-$\lambda $-distance-balanced. In other words, if  $G $ and $H $ are quasi-$\lambda $-distance-balanced, then $ G\times H $ is disconnected. 
	Moreover, a sufficient condition for  the connectedness of $ G\times H $ is that $ |E(G)|+|E(H)| $ must be even (see also \cite{R6}). 
\end{remark}

In the following we define a new concept related to regularity in graphs to present a result concerning with 
quasi-$\lambda $-distance-balanced property of $ G\times H $. First, suppose that
 $ \textrm{Deg}(G) $ denotes the set of all distinct degrees observed in $ G. $

\begin{definition}\label{def2}
A graph $ G $ is said to be ($k_{1},k_{2}$)-\textit{regular} if $ \textrm{Deg}(G)=\{k_{1}, k_{2}\} $ and no adjacent vertices have the same degree.
\end{definition}
\begin{remark}\label{rem3}
According to Conjecture \ref{conj1}, we conjecture that any quasi-$\lambda $-distance-balanced graph is ($k_{1},k_{2}$)-regular 
for some $ k_{1}, k_{2}\in \mathbb{N}. $
\end{remark}
Denoted by $ D_{i,j}^{G}(x,y) $ we mean
\begin{eqnarray*}
 D_{i,j}^{G}(x,y)=\{u\in V(G) \ | \ d(u,x)=i, d(u,y)=j   \}.
\end{eqnarray*}
The consequence of the following result maybe useful for the future studies. 
\begin{proposition}
	Suppose $ G, H $ are  $(r_{1},\overline{r}_{1})$-regular and $(r_{2},\overline{r}_{2})$-regular graphs, respectively, where 
	 $ r_{1}>\overline{r}_{1} $ and $ \overline{r}_{2}> r_{2} $ with 
	 $ r_{1}+r_{2}=\overline{r}_{1}+ \overline{r}_{2} $ and $ \textrm{diam}(G)=\textrm{diam}(H)=3. $
	Also, let $ G, H $ be quasi-$\lambda $-distance-balanced such that  
	$$\lambda=\dfrac{n_{G}+n_{H}}{2(r_{1}+r_{2})}>1, \qquad  n_{G}:=|V(G)|>n_{H}:=|V(H)|.$$
	Then there is no $\lambda^{*}>1$ in which the graph $ G\times H $ is quasi-$\lambda^{*} $-distance-balanced.
\end{proposition}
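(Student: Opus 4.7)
My plan is as follows. First, invoke the earlier theorem that every connected quasi-$\lambda$-distance-balanced graph is bipartite, which yields bipartitions $V(G) = A_1 \cup B_1$ and $V(H) = A_2 \cup B_2$. Because $G$ is $(r_1, \overline{r}_1)$-regular with adjacent vertices of different degrees, a short argument (tracing alternating degrees along any path and using connectedness) forces the class $A_1$ to be homogeneous of degree $r_1$ and $B_1$ of degree $\overline{r}_1$ (so $|A_1| r_1 = |B_1| \overline{r}_1$), and analogously for $H$. Standard results on direct products of connected bipartite graphs then imply that $G \times H$ has exactly two connected components
\begin{equation*}
C_1 = (A_1 \times A_2) \cup (B_1 \times B_2), \qquad C_2 = (A_1 \times B_2) \cup (B_1 \times A_2),
\end{equation*}
and within each component the product distance simplifies to $d_{G \times H}((x_1, x_2),(y_1, y_2)) = \max\{d_G(x_1, y_1), d_H(x_2, y_2)\}$, valid precisely when $d_G(x_1, y_1)$ and $d_H(x_2, y_2)$ have the same parity. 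Since the quasi-$\lambda^*$-distance-balanced condition involves only pairs at finite distance, $G \times H$ is quasi-$\lambda^*$-DB if and only if each of $C_1, C_2$ is, with the same $\lambda^*$.

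Next, I fix an edge $e = ((u_1, u_2),(v_1, v_2))$ in $C_1$ with $u_i \in A_i$ and $v_i \in B_i$ and compute both $|W_{(u_1, u_2)(v_1, v_2)}^{G \times H}|$ and $|W_{(v_1, v_2)(u_1, u_2)}^{G \times H}|$. For each other $(x_1, x_2) \in C_1$, I classify according to the signs of $d_G(x_1, u_1) - d_G(x_1, v_1) \in \{-1, +1\}$ and $d_H(x_2, u_2) - d_H(x_2, v_2) \in \{-1, +1\}$. Matching signs place $(x_1, x_2)$ unambiguously in one of the product $W$-sets; mismatched signs require comparing $\max\{d_G, d_H\}$ at the two endpoints, which, by $\mathrm{diam}(G) = \mathrm{diam}(H) = 3$, reduces to a finite case analysis on the distance layers $\{0, 1, 2, 3\}$. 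Using the elementary diameter-3 counts
\begin{equation*}
|W_{u_1 v_1}^G| = r_1 + |A_1| - \overline{r}_1, \qquad |W_{v_1 u_1}^G| = \overline{r}_1 + |B_1| - r_1
\end{equation*}
(and the $H$-analogues), further subdivided by distance parity (i.e.\ by membership in $A_i$ versus $B_i$), one assembles closed-form expressions for the two cardinalities above.

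I then repeat this computation for an edge in $C_2$, where the roles of $A_2$ and $B_2$ are swapped, and impose the ratio equations demanded by quasi-$\lambda^*$-DB. Invoking $r_1 + r_2 = \overline{r}_1 + \overline{r}_2$ together with the identity $\lambda = (n_G + n_H)/(2(r_1 + r_2))$ (which, via the quasi-$\lambda$-DB relations in each factor, pins down $|A_i|$ and $|B_i|$ in terms of the regularity parameters), reduces the ratios in $C_1$ and $C_2$ to explicit rational expressions in $(r_1, \overline{r}_1, r_2, \overline{r}_2)$. The target is to show that the two required ratios cannot simultaneously lie in $\{\lambda^*, 1/\lambda^*\}$ unless $\lambda^* = 1$, contradicting $\lambda^* > 1$.

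The main obstacle is the cross-product enumeration: the $W$-sets in a bipartite direct product split as products of distance layers in each factor, and the mixed-sign cases force one to subdivide each factor's $W$-set by its intersection with $A_i$ and $B_i$ before the product counts can be assembled. Keeping the boundary between strictly-closer and equidistant pairs correct — where the diameter-3 hypothesis prevents larger distance layers from intruding — is the error-prone step. Once the counts are in hand, the algebraic reduction under the given $\lambda$-identity is routine if tedious.
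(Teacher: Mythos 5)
Your overall strategy coincides with the paper's: use the distance formula for the direct product together with $\mathrm{diam}(G)=\mathrm{diam}(H)=3$ to express $\vert W^{G\times H}_{(x,a)(y,b)}\vert$ and $\vert W^{G\times H}_{(y,b)(x,a)}\vert$ in terms of the distance layers of the two factors, and then show that the resulting ratio conditions are algebraically incompatible with $G\times H$ being quasi-$\lambda^{*}$-distance-balanced. The two-component decomposition of the bipartite direct product and the degree-homogeneity of the bipartition classes that you invoke are correct and harmless additions, and your preliminary count $\vert W^{G}_{u_1v_1}\vert = r_1+\vert A_1\vert-\overline{r}_1$ checks out.

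The gap is that the argument stops exactly where the content of the proposition lies. You describe the classification of a product vertex by the signs of the two coordinate distance-differences and then announce as a ``target'' that the resulting ratios cannot all lie in $\{\lambda^{*},1/\lambda^{*}\}$ for any $\lambda^{*}>1$, but you never produce the closed-form cardinalities and never check the algebra. This is not a routine omission, because your own (correct) enumeration scheme yields counts that are substantially more complicated than a first glance suggests: besides the pairs with exactly one coordinate at distance $2$ from the first endpoint, which contribute $\vert D^{G}_{2,3}(x,y)\vert+\vert D^{H}_{2,3}(a,b)\vert$, the set $W^{G\times H}_{(x,a)(y,b)}$ also contains the vertex $(x,a)$ itself, the $r_1r_2-1$ remaining common neighbours coming from the matched-sign case $(1,1)$, and all pairs with both coordinates at distance $2$, which contribute a genuinely quadratic term of the shape $\bigl(\vert D^{G}_{2,1}\vert+\vert D^{G}_{2,3}\vert\bigr)\bigl(\vert D^{H}_{2,1}\vert+\vert D^{H}_{2,3}\vert\bigr)-\vert D^{G}_{2,1}\vert\,\vert D^{H}_{2,1}\vert$. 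The paper's displayed formula for these $W$-sets keeps only the linear part, so its concluding manipulation is not available to you as a template; with the full counts the ratio is a different rational function of $r_1,\overline{r}_1,r_2,\overline{r}_2,n_G,n_H$, and whether the hypothesis $\lambda=(n_G+n_H)/(2(r_1+r_2))$ still forces the desired incompatibility is precisely the point that must be verified. Until that computation is carried out and shown to close, the proposal does not establish the proposition.
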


\begin{proof}
Following the Definition \ref{def2} and hypotheses, assume that 
$$ \text{deg}_{G}(x)=r_{1}, \ \text{deg}_{G}(y)=\overline{r}_{1}, \ \text{deg}_{H}(a)=r_{2}, \ \text{deg}_{H}(b)=\overline{r}_{2}.  $$
Then, without loss of generality, we get
\begin{eqnarray}\label{eq2.2}
\begin{split}
&|W_{xy}^{G}| = r_{1} + |D_{2,3}^{G}(x,y) |, \quad |W_{yx}^{G}|=\overline{r}_{1} +|D_{2,3}^{G}(y,x) |, 
\\&|W_{ab}^{H}| = r_{2} + |D_{2,3}^{G}(a,b) |, \quad |W_{ba}^{H}|=\overline{r}_{2} +|D_{2,3}^{G}(b,a) |, 
\end{split}
\end{eqnarray}
for any edges $ xy \in E(G) $ and $ ab \in E(H). $ Now, consider the case 
\begin{eqnarray}\label{eq2.7}
|W_{xy}^{G}| = \lambda |W_{yx}^{G}|=\dfrac{n_{G}}{\lambda +1}, \quad  |W_{ab}^{H}| = \lambda |W_{ba}^{H}|=\dfrac{n_{H}}{\lambda +1} 
\end{eqnarray}
regarding the assumption that $ G, H $ are quasi-$\lambda $-distance-balanced and so bipartite.
\\

For any arbitrary fixed edges $ xy \in E(G) $ and $ ab \in E(H) $, let $ (u,v)\in W_{(x,a)(y,b)}^{G\times H} $ then 
\begin{eqnarray*}
 d_{G\times H}((u,v),(x,a))< d_{G\times H}((u,v),(y,b)). 
\end{eqnarray*}
We know that for every pair of vertices $ (r,s), (t,w)\in V(G\times H) $ 
we have that $ d_{G\times H}((r,s), (t,w))= i $ if and only if either $d_{G}(r,t)=i$ and $i-d_{H}(s,w)$ is a nonnegative even number or  $d_{H}(s,w)=i$ and $i-d_{G}(r,t)$ is a nonnegative even number 
(see also \cite[Lemma 1.1]{R5}). Now, since   $ \textrm{diam}(G)=\textrm{diam}(H)=3 $ we have
\begin{eqnarray*}
	d_{G\times H}((u,v),(x,a))\in \{1,2 \}, \qquad d_{G\times H}((u,v),(y,b))\in \{ 2,3\}. 
\end{eqnarray*}
If $ d_{G\times H}((u,v),(y,b))=3 $ then
\begin{eqnarray}\label{eq2.3}
  d_{G}(u,y)=3, \ d_{H}(v,b)\in \{1,3 \}   \qquad \text{or} \qquad   d_{H}(v,b)=3, \  d_{G}(u,y)\in \{1,3 \}.
\end{eqnarray}
On the other hand, $d_{G\times H}((u,v),(x,a))\in \{1,2 \} $ implies that 
\begin{eqnarray}\label{eq2.4}
	ux \in E(G), \ va \in E(H)
 \quad \text{or} \quad 	u=x, \ d_{H}(v,a)=2   \quad \text{or} \quad  
 v=a, \  d_{G}(u,x)=2.
\end{eqnarray}
Eqs. \eqref{eq2.3} and \eqref{eq2.4} imply the following cases:
 \begin{eqnarray*}
  	\bigg(u=x, \ d_{H}(v,a)=2\bigg), \ \bigg(d_{H}(v,b)=3, \  d_{G}(u,y)\in \{1,3 \}\bigg)  \ \Longrightarrow \  
  	(u,v)\in \{ x\}\times D_{2,3}^{H}(a,b)
 \end{eqnarray*}
or 
\begin{eqnarray*}
	\bigg(v=a, \  d_{G}(u,x)=2 \bigg), \ \bigg(d_{G}(u,y)=3, \ d_{H}(v,b)\in \{1,3 \} \bigg)  \ \Longrightarrow \  
	(u,v)\in D_{2,3}^{G}(x,y) \times \{ a\}.
\end{eqnarray*}
That is, 
\begin{eqnarray}\label{eq2.5}
 W_{(x,a)(y,b)}^{G\times H} \subset 
  \bigg(\{ x\}\times D_{2,3}^{H}(a,b)\bigg) \bigcup \bigg(D_{2,3}^{G}(x,y) \times \{ a\}\bigg).
\end{eqnarray} 
One can easily show that the converse of \eqref{eq2.5} is also true, thus 
\begin{eqnarray}\label{eq2.6}
\begin{split}
 &W_{(x,a)(y,b)}^{G\times H} =
\bigg(\{ x\}\times D_{2,3}^{H}(a,b)\bigg) \bigcup \bigg(D_{2,3}^{G}(x,y) \times \{ a\}\bigg),
\\& W_{(y,b)(x,a)}^{G\times H} =
\bigg(\{ y\}\times D_{3,2}^{H}(a,b)\bigg) \bigcup \bigg(D_{3,2}^{G}(x,y) \times \{ b\}\bigg).
\end{split}
\end{eqnarray}
The equalities \eqref{eq2.6} implies that 
\begin{eqnarray}\label{eq2.12}
 \bigg | W_{(x,a)(y,b)}^{G\times H}\bigg | =\bigg | D_{2,3}^{H}(a,b) \bigg | +\bigg |  D_{2,3}^{G}(x,y) \bigg |, \quad 
 \bigg |W_{(y,b)(x,a)}^{G\times H} \bigg |= \bigg |D_{3,2}^{H}(a,b) \bigg |+ \bigg |D_{3,2}^{G}(x,y)\bigg |.
\end{eqnarray}
This together with \eqref{eq2.2} and \eqref{eq2.7} imply that 
\begin{eqnarray*}
	\bigg | W_{(x,a)(y,b)}^{G\times H}\bigg | = \bigg(\dfrac{n_{G}}{\lambda +1} - r_{1}\bigg) + \bigg(\dfrac{n_{H}}{\lambda +1}-r_{2}\bigg), \quad \bigg |W_{(y,b)(x,a)}^{G\times H} \bigg |=
	\bigg(\dfrac{n_{G}}{\lambda(\lambda +1)} - \overline{r}_{1}\bigg) + \bigg(\dfrac{n_{H}}{\lambda(\lambda +1)}-\overline{r}_{2}\bigg)
\end{eqnarray*}
which shows that 
\begin{eqnarray}\label{eq2.8}
	\bigg | W_{(x,a)(y,b)}^{G\times H}\bigg | =\lambda_{1} \bigg |W_{(y,b)(x,a)}^{G\times H} \bigg | 
	\quad \text{where} \quad 
	\lambda_{1}=\dfrac{\lambda(n_{G}+n_{H})-(r_{1}+r_{2})\lambda(\lambda+1)}
	{(n_{G}+n_{H})-(\overline{r}_{1}+\overline{r}_{2})\lambda(\lambda+1)}>1.
\end{eqnarray}
Now, consider the second case 
\begin{eqnarray}\label{eq2.9}
|W_{yx}^{G}| = \lambda |W_{xy}^{G}|=\dfrac{n_{G}}{\lambda +1}, \quad  |W_{ab}^{H}| = \lambda |W_{ba}^{H}|=\dfrac{n_{H}}{\lambda +1}. 
\end{eqnarray}
Considering this together with \eqref{eq2.2} and \eqref{eq2.12} we obtain
\begin{eqnarray*}
	\bigg | W_{(x,a)(y,b)}^{G\times H}\bigg | = \bigg(\dfrac{n_{G}}{\lambda(\lambda +1)} - r_{1}\bigg) + \bigg(\dfrac{n_{H}}{\lambda +1}-r_{2}\bigg), \quad \bigg |W_{(y,b)(x,a)}^{G\times H} \bigg |=
	\bigg(\dfrac{n_{G}}{\lambda +1 } - \overline{r}_{1}\bigg) + \bigg(\dfrac{n_{H}}{\lambda(\lambda +1)}-\overline{r}_{2}\bigg)
\end{eqnarray*}
which means that 
\begin{eqnarray}\label{eq2.10}
  \bigg |W_{(y,b)(x,a)}^{G\times H} \bigg | =\lambda_{2} \bigg | W_{(x,a)(y,b)}^{G\times H}\bigg | 
\quad \text{where} \quad 
\lambda_{2}=\dfrac{(\lambda n_{G}+n_{H})-(\overline{r}_{1}+\overline{r}_{2})\lambda(\lambda+1)}
{(n_{G}+\lambda n_{H})-(r_{1}+r_{2})\lambda(\lambda+1)}>1.
\end{eqnarray}
For the third case, suppose that 
\begin{eqnarray}\label{eq2.11}
|W_{xy}^{G}| = \lambda |W_{yx}^{G}|=\dfrac{n_{G}}{\lambda +1}, \quad  |W_{ba}^{H}| = \lambda |W_{ab}^{H}|=\dfrac{n_{H}}{\lambda +1}. 
\end{eqnarray}
Then using \eqref{eq2.2} and \eqref{eq2.12} we get 
\begin{eqnarray*}
	\bigg | W_{(x,a)(y,b)}^{G\times H}\bigg | = \bigg(\dfrac{n_{G}}{\lambda +1} - r_{1}\bigg) + \bigg(\dfrac{n_{H}}{\lambda(\lambda +1) }-r_{2}\bigg), \quad \bigg |W_{(y,b)(x,a)}^{G\times H} \bigg |=
	\bigg(\dfrac{n_{G}}{\lambda(\lambda +1)} - \overline{r}_{1}\bigg) + \bigg(\dfrac{n_{H}}{\lambda +1 }-\overline{r}_{2}\bigg).
\end{eqnarray*}
Therefore, 
\begin{eqnarray}\label{eq2.13}
\bigg | W_{(x,a)(y,b)}^{G\times H}\bigg | =\lambda_{2} \bigg |W_{(y,b)(x,a)}^{G\times H} \bigg |.
\end{eqnarray}
For the last case let us take 
\begin{eqnarray}\label{eq2.14}
|W_{yx}^{G}| = \lambda |W_{xy}^{G}|=\dfrac{n_{G}}{\lambda +1}, \quad  |W_{ba}^{H}| = \lambda |W_{ab}^{H}|=\dfrac{n_{H}}{\lambda +1}. 
\end{eqnarray}
Then using \eqref{eq2.2} and \eqref{eq2.13} we get 
\begin{eqnarray*}
	\bigg | W_{(x,a)(y,b)}^{G\times H}\bigg | = \bigg(\dfrac{n_{G}}{\lambda(\lambda +1)} - r_{1}\bigg) + \bigg(\dfrac{n_{H}}{\lambda(\lambda +1) }-r_{2}\bigg), \quad \bigg |W_{(y,b)(x,a)}^{G\times H} \bigg |=
	\bigg(\dfrac{n_{G}}{\lambda +1} - \overline{r}_{1}\bigg) + \bigg(\dfrac{n_{H}}{\lambda +1 }-\overline{r}_{2}\bigg).
\end{eqnarray*}
which implies that 
\begin{eqnarray}\label{eq2.15}
   \bigg |W_{(y,b)(x,a)}^{G\times H} \bigg | = \lambda_{1} \bigg | W_{(x,a)(y,b)}^{G\times H}\bigg |.
\end{eqnarray}
Moving forward, one can see that $  \lambda_{1}= \lambda_{2} $, since 
\begin{eqnarray*} 
\begin{split}
 \lambda_{1}= \lambda_{2} &\quad \Longleftrightarrow   \quad 
 \dfrac{\lambda(n_{G}+n_{H})-(r_{1}+r_{2})\lambda(\lambda+1)}
 {(n_{G}+n_{H})-(\overline{r}_{1}+\overline{r}_{2})\lambda(\lambda+1)}
 =\dfrac{(\lambda n_{G}+n_{H})-(\overline{r}_{1}+\overline{r}_{2})\lambda(\lambda+1)}
 {(n_{G}+\lambda n_{H})-(r_{1}+r_{2})\lambda(\lambda+1)}
\\ & \quad \Longleftrightarrow   \quad \lambda=\dfrac{n_{G}+n_{H}}{2(r_{1}+r_{2})}.
\end{split}
\end{eqnarray*}
Hence, $ G\times H $ can not be quasi-$\lambda^{*} $-distance-balanced graph if 
$$\lambda^{*} = \lambda_{1}= \lambda_{2}=-1 $$
which contradicts, and the proof is complete.
\end{proof}
The recent result is valuable because of the following remark. 
\begin{remark}
	Following Conjecture \ref{conj1} and Remark \ref{rem3}, it seems
	$ G\times H $ cannot be a quasi-$\lambda^{*} $-distance-balanced graph for any $\lambda^{*}>1$ whenever 
	 $ G, H $ are quasi-$\lambda $-distance-balanced graphs with diameter 3. 
\end{remark}


\subsection{A method to construct quasi-$\lambda $-distance-balanced graphs}

In the following we first improve the method presented by Abedi et al. \cite{R1} to obtain the quasi-$\lambda $-distance-balanced graphs
and then using a new technique we generate the quasi-$\lambda $-distance-balanced graphs with 
arbitrary diameter. 
\\

Given the simple graphs $ G_{1}, G_{2} $, by the symbol $ G_{1}*G_{2} $ we mean 
$$ V(G_{1}*G_{2})= V(G_{1})\cup V(G_{2}),  \qquad  
E(G_{1}*G_{2})=\{ab \ |  \ a\in V(G_{1}), b\in V(G_{2}) \} \cup  E(G_{1})\cup E(G_{2}).  $$

Moreover, notation $\overline{G_{1}*G_{2}* \cdots *G_{k}}$ stands for the graph 
$ (G_{1}*G_{2}* \cdots *G_{k})\cup (G_{1}*G_{k})  $  which looks like a cycle (see Figure 4).  

\begin{definition}
	Let $ G $ be a non-empty  $ (t_{1}, t_{2})$-biregular bipartite graph of order $ (n_{1}+n_{2}) $ with bipartition sets $ B $ and $ C $ with sizes $ n_{1} $ and $ n_{2} $, and degree sets $ \{ t_{1}\} $ and $ \{t_{2} \} $, respectively. Let $ m $ and $ k $ be non-zero integers with $ 1  \leq n_{1} + m \leq n_{2}+k $. 
	Let $ A $ and $ D $ be sets of size $ m $ and $ k $, respectively. 
	Graph $ H(m, G, k) $ is defined as the
	graph with vertex set $ V(H) = A \cup B \cup C \cup D, $ and edge set $ E(H)=E(A*B)\cup E(G)\cup E(C*D)  $ (see  Figure 1).  
\end{definition}
For the case $ t=t_{1}= t_{2} $ and $ n=n_{1}=n_{2} $, graph $ Ala(m, G, k) $ in \cite[Definition 3.1]{R1} is obtained.
	\begin{figure}[H]\label{pic5}\footnotesize 
		\begin{center}
			\hskip 0.5 cm\includegraphics[scale=0.6]{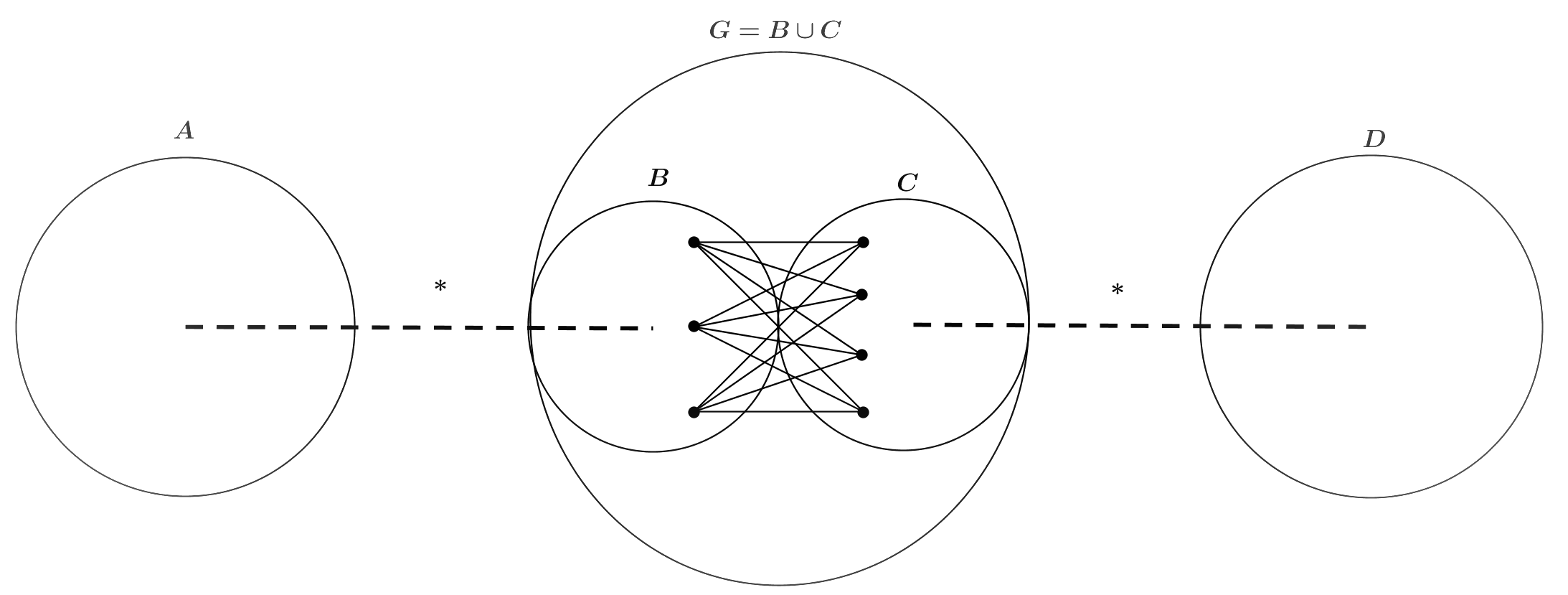}
		\end{center}
		\caption{\footnotesize $ (3, 4)$-regular bipartite graph $ G=B\cup C $ and the sets $ A, D $ in the structure of $ H(m, G, k) $.  \normalsize}
	\end{figure}\normalsize

\begin{proposition}\label{pro1}
Let $ G $ be a non-empty $ (t_{1}, t_{2})$-regular bipartite graph of order $ (n_{1}+n_{2}) $. The graph $ H(m, G, k) $ is 
quasi-$\lambda $-DB if and only if 
$ n_{2}+k \neq n_{1} + m$, $ t_{1}=n_{2}-m$ and $ t_{2}=n_{1}-k $, and $ \lambda=\frac{n_{2}+k}{n_{1} + m} $. Moreover, if  $ t_{1}= t_{2}=n_{1}-k=n_{2}-m $, 
then $ H(m, G, k) $ is DB.
\end{proposition}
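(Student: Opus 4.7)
My approach would be to analyze distances in $H=H(m,G,k)$ explicitly, count the balance sets $|W_{uv}^H|$ for each of the three types of edges block by block, and then verify the required ratios by substitution.

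I first verify that $H$ is bipartite with parts $A\cup C$ and $B\cup D$ and that $\operatorname{diam}(H)=3$. The inter-block distances are forced by the block structure: two vertices in the same block $A,B,C,D$ are at distance $2$ (through the opposite complete-joined block), $d_H(a,c)=d_H(b,d)=2$, $d_H(a,d)=3$, and for $b\in B$, $c\in C$ we have $d_H(b,c)=1$ if $bc\in E(G)$ and $d_H(b,c)=3$ otherwise (using $t_1,t_2\ge 1$ from the non-emptiness of $G$).

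Next, the edges of $H$ split into three types: $ab\in E(A*B)$, $bc\in E(G)$, and $cd\in E(C*D)$. For each edge I classify every vertex of $H$ by proximity to the endpoints; a brief blockwise case check shows no vertex is equidistant from the endpoints of any such edge, and the biregularity of $G$ yields
\begin{equation*}
|W_{ab}^H|=n_1+n_2-t_1,\qquad |W_{ba}^H|=m+k+t_1,
\end{equation*}
\begin{equation*}
|W_{bc}^H|=m+n_1+t_1-t_2,\qquad |W_{cb}^H|=k+n_2-t_1+t_2,
\end{equation*}
\begin{equation*}
|W_{cd}^H|=k+m+t_2,\qquad |W_{dc}^H|=n_1+n_2-t_2,
\end{equation*}
with each pair summing to $N:=n_1+n_2+m+k$.

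For the forward direction, substituting $t_1=n_2-m$ and $t_2=n_1-k$ collapses every pair above to $\{n_1+m,\,n_2+k\}$, and since $n_1+m\le n_2+k$ and $n_1+m\neq n_2+k$ by hypothesis, all three ratios equal $\lambda:=(n_2+k)/(n_1+m)>1$, so $H$ is quasi-$\lambda$-DB. Conversely, if $H$ is quasi-$\lambda$-DB, biregularity makes the ratio at every edge of a given type uniform, so each of the three pairs $(|W_{uv}^H|,|W_{vu}^H|)$ is $(L,R)$ or $(R,L)$ with $L+R=N$ and $\lambda=R/L$; writing out the three resulting sign equations in $(t_1,t_2)$ and using the biregular identity $n_1t_1=n_2t_2$ together with the parameter inequality $n_1+m\le n_2+k$ isolates the required sign pattern and forces $t_1=n_2-m$, $t_2=n_1-k$, and $\lambda=(n_2+k)/(n_1+m)$. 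The ``moreover'' clause falls out as the limiting case $\lambda=1$: when $t_1=t_2=n_1-k=n_2-m$, every count above reduces to $N/2$, whence $|W_{uv}^H|=|W_{vu}^H|$ on every edge. The main technical obstacle will be the sign case analysis in the converse direction, since a priori several sign patterns are feasible for the three ratio equations; they are pruned using $n_1t_1=n_2t_2$ and $n_1+m\le n_2+k$.
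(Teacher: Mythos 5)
Your block-by-block distance analysis and the six cardinalities $|W_{uv}^H|$ are correct and coincide with the computation in the paper's own proof (each pair does sum to $N=n_1+n_2+m+k$, since $H$ is bipartite), and your forward direction is sound: substituting $t_1=n_2-m$ and $t_2=n_1-k$ collapses every pair to $\{n_1+m,\,n_2+k\}$, giving the ratio $\lambda=(n_2+k)/(n_1+m)$; the ``moreover'' clause follows the same way. So up to and including the ``if'' direction you have essentially reproduced the paper's argument.

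The genuine gap is in the converse, exactly at the step you flag as the main obstacle. You claim that the sign-case analysis, pruned by $n_1t_1=n_2t_2$ and $n_1+m\le n_2+k$, isolates a unique sign pattern and forces $t_1=n_2-m$, $t_2=n_1-k$. It does not: the pattern in which $m+k+t_1$ and $m+k+t_2$ lie on the \emph{same} side of the pair $\{L,R\}$ forces $t_1=t_2$, hence $n_1=n_2$ by the handshake identity, and is then consistent with all three ratio equations when $t_1=t_2=n_1-\max(m,k)$ and $m\ne k$. Concretely, take $G=C_8$ with $n_1=n_2=4$, $t_1=t_2=2$, $m=1$, $k=2$ (so $n_1+m=5\le n_2+k=6$). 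Your formulas give $|W_{ab}^H|=6$, $|W_{ba}^H|=5$, $|W_{bc}^H|=5$, $|W_{cb}^H|=6$, $|W_{cd}^H|=5$, $|W_{dc}^H|=6$, so $H(1,C_8,2)$ is quasi-$\frac{6}{5}$-DB, yet $t_1=2\ne n_2-m=3$. Hence the ``only if'' direction cannot be rescued by any amount of case pruning: the characterization is false as stated. In fairness, the paper's proof has the identical defect --- it writes down the same three fractions and then asserts without argument that membership in $\{\lambda,\lambda^{-1}\}$ ``occurs only if'' $t_1=n_2-m$ and $t_2=n_1-k$ --- but your added claim that $n_1t_1=n_2t_2$ together with $n_1+m\le n_2+k$ eliminates the remaining sign patterns is precisely where a fully written-out version of your proof would break down.
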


\begin{proof}
	In view of the proof of \cite[Proposition 3.2]{R1}, for any adjacent vertices $ u $ and $ v $ in $ V(G) $, 
	$ W_{uv}^{H}\cup W_{vu}^{H} =V(H) $ since $ H $ is bipartite, suppose that $ a\in A $  and $ b \in B $  are taken arbitrarily, then it is easily seen that 
	\begin{eqnarray*} 
		\begin{split}
&|W_{ab}^{H} | = 1+(n_{1}-1)	+(n_{2}-t_{1})+0 = n_{1}+n_{2}-t_{1},
 \\&|W_{ba}^{H} | = 1+(m-1)+t_{1}+k =m+k+t_{1}.
		\end{split}
	\end{eqnarray*}  
Moving forward, letting $ b\in B $, $ c\in C $ and $ d\in D $ with $ bc\in E(G), $ we get 
  \begin{eqnarray*} 
  	\begin{split}
  	&	|W_{bc}^{H} | =m+ (1+n_{1}-t_{2})+(t_{1}-1)+0 = m+ n_{1}+t_{1}-t_{2}, 
  	\\&	|W_{cb}^{H} | =k+ (1+n_{2}-t_{1})+(t_{2}-1)+0 = k+ n_{2}+t_{2}-t_{1},
  	\\&	|W_{cd}^{H} | =1+(k-1)+n_{2}+m = k+m+ t_{2},
  	\\&	|W_{dc}^{H} | =1+  (n_{2}-1)+(n_{1}-t_{2})+0=n_{1}+n_{2}-t_{2}.
  	\end{split}
  \end{eqnarray*} 
Hence, if $ H(m, G, k) $ is a quasi-$\lambda $-DB graph then 
 \begin{eqnarray}\label{eq2.16} 
	\begin{split}
	\dfrac{n_{1}+n_{2}-t_{1}}{ k+m+ t_{1}}, \ \dfrac{n_{1}+n_{2}-t_{2}}{ k+m+ t_{2}}, \ \dfrac{m+ n_{1}+t_{1}-t_{2}}{k+ n_{2}+t_{2}-t_{1}} 
	\in \{\lambda, \lambda^{-1}\},
	\end{split}
\end{eqnarray} 
which occurs only if 
$ t_{1}=n_{2}-m $ and $ t_{2} =n_{1}-k $, and then following \eqref{eq2.16} one can see that  $ \lambda $ would take the form 
$ \lambda=\frac{n_{2}+k}{n_{1} + m} $. For the converse, considering the discussion above, the desired conclusion follows. 
Furthermore, $ H(m, G, k) $ is DB if and only if each fraction in \eqref{eq2.16} is identically equal to 1, that is, 
it must be  $ t_{1}= t_{2}=n_{1}-k=n_{2}-m $. Therefore, the proof is complete.
\end{proof}
In the following, Figure 2 shows a quasi-$\frac{8}{7} $-DB graph $ H(3, G, 2)=3K_{1}*G*2K_{1} $ 
where $ G $ is a  $ (3, 2)$-biregular bipartite graph. 
And this is because of the fact that $ t_{1}=n_{2}-m=3 $ and $ t_{2} =n_{1}-k=2 $.
Moreover, 
$ |W_{ab}^{H} | \in \{7,8\}$ for any edge $ab\in E(H).$

\begin{remark}
In view of the assumptions of Proposition \ref{pro1}, 
notice that for the case $ t_{1}=t_{2} $ one can simply derive that $ n_{1}=n_{2} $ and then $ k=m. $	
\end{remark}

	\begin{figure}[H]\label{pic6}\footnotesize 
	\begin{center}
		\hskip 0.5 cm\includegraphics[scale=0.35]{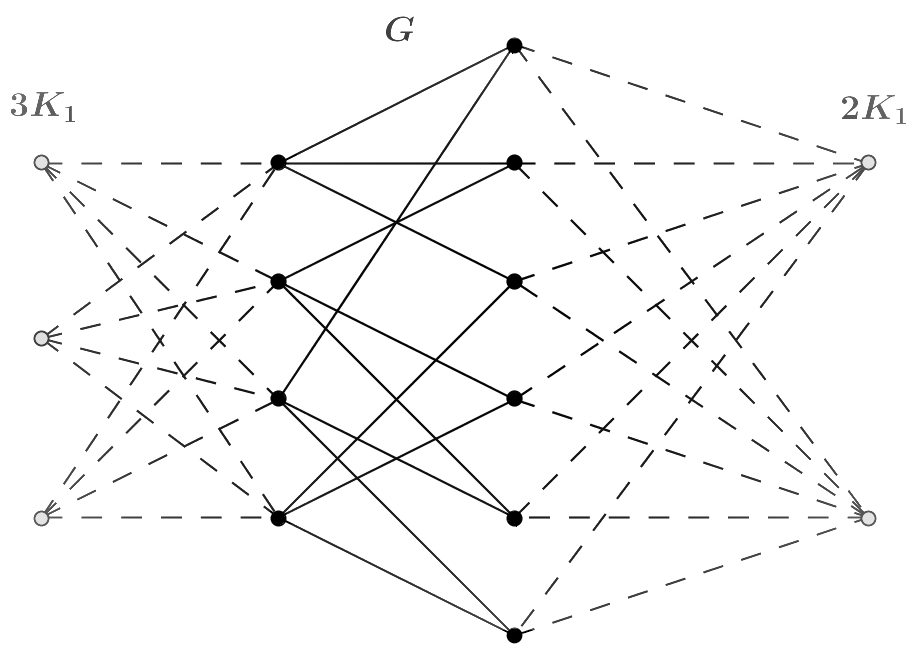}
	\end{center}
	\caption{\footnotesize $ H(3, G, 2) $ graph with $ (3, 2)$-regular bipartite graph $ G$. \normalsize}
\end{figure}\normalsize

In Figures 3-4, using the empty graphs $ mK_{1} $ and $ nK_{1} $, and the operation $ * $ we present a 
new class of quasi-$\lambda $-distance-balanced graphs with diameter 2. Here, for any $ x\in V(nK_{1}) $ 
and $ y\in V(mK_{1}) $ we see that 
\begin{eqnarray*} 
	\begin{split}
	 &|W_{xy}^{G_{1}}| = \lambda |W_{yx}^{G_{1}}|,  \quad \lambda =\dfrac{2m}{n} \quad  (\text{if} \ 2m>n), 
\quad \text{and} \quad 	 |W_{yx}^{G_{1}}| = \lambda  |W_{xy}^{G_{1}}|,  \quad \lambda =\dfrac{n}{2m} \quad  (\text{if} \ 2m<n),
  \\& |W_{xy}^{G_{2}}| = \lambda |W_{yx}^{G_{2}}|,  \quad \lambda =\dfrac{m}{n} \quad  (\text{if} \ m>n),
  \quad \text{and} \quad 	 |W_{yx}^{G_{2}}| = \lambda  |W_{xy}^{G_{2}}|,  \quad \lambda =\dfrac{n}{m} \quad  (\text{if} \ m<n).
	\end{split}
\end{eqnarray*}

\begin{figure}[H]\label{pic1}\footnotesize 
	\begin{center}
		\hskip 0.5 cm\includegraphics[scale=0.35]{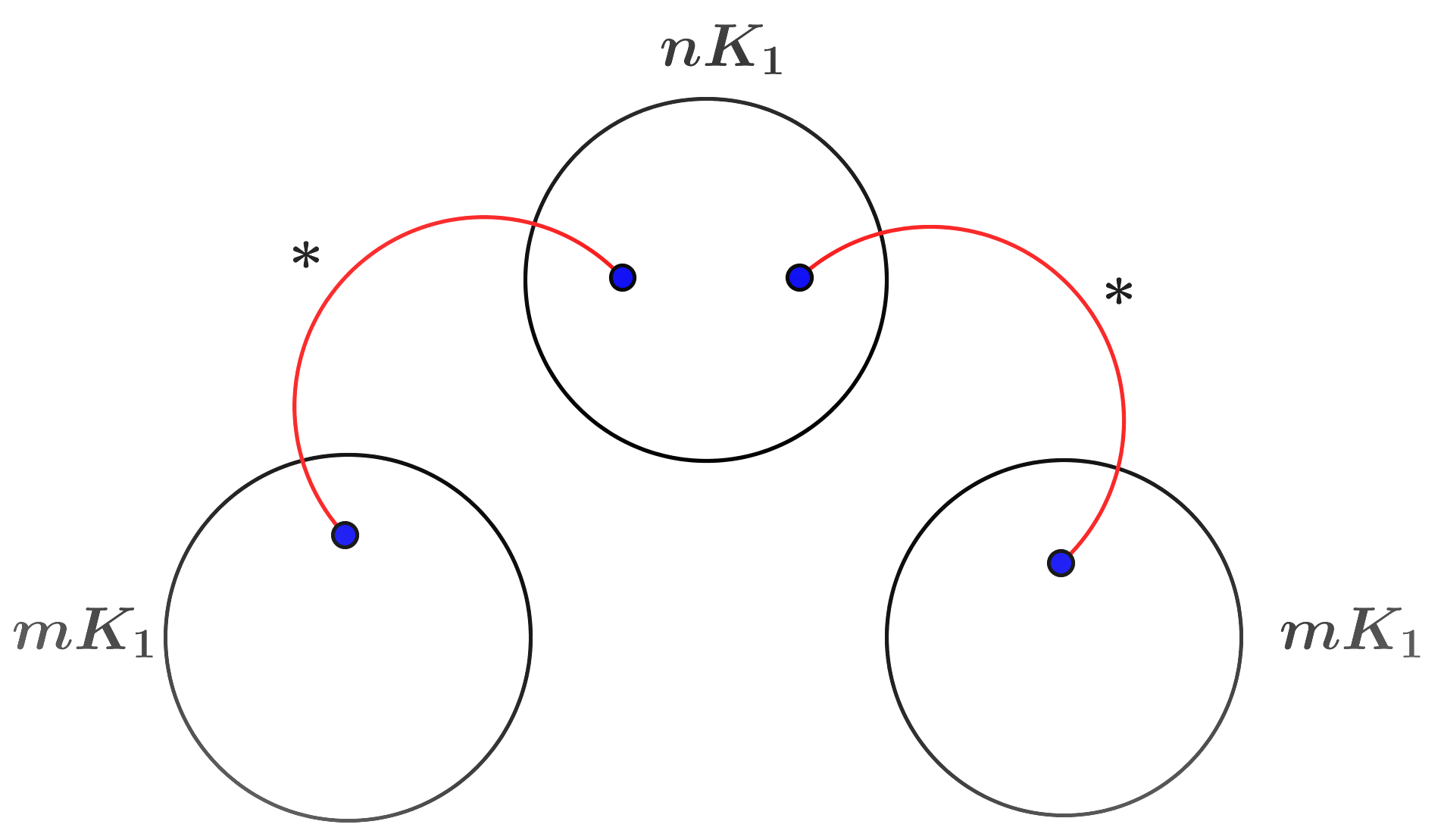}
	\end{center}
	\caption{\footnotesize Graph $ G_{1}= mK_{1} * nK_{1} * mK_{1} $, $ n\neq 2m $, is a quasi-$\lambda $-DB graph with $ d=2 $.  \normalsize}
\end{figure}\normalsize

\begin{figure}[H]\label{pic2}\footnotesize 
	\begin{center}
		\hskip 0.5 cm\includegraphics[scale=0.35]{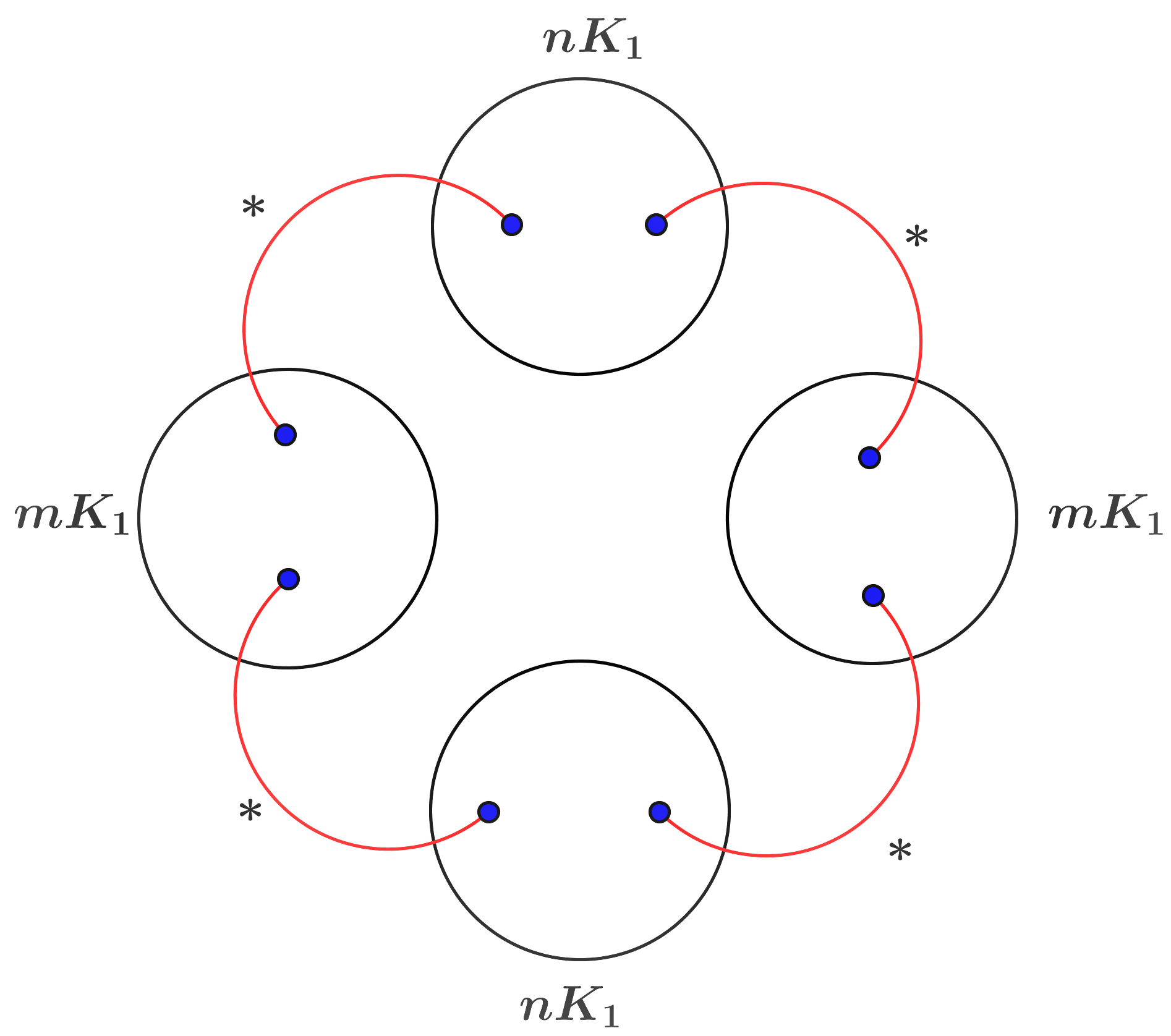}
	\end{center}
	\caption{\footnotesize Graph $ G_{2}= \overline{mK_{1} * nK_{1} * mK_{1}* nK_{1}} $, $ n\neq m $, is a quasi-$\lambda $-DB graph with $ d=2 $.  \normalsize}
\end{figure}\normalsize
In the following figure, graph $ G_{3}= \underbrace{\overline{mK_{1} * nK_{1} * mK_{1}*\cdots * mK_{1}* nK_{1}}}_{2d-\textrm{times}}  $, $ n\neq m $, is a  quasi-$\lambda $-DB graph with diameter $ d$ and $\lambda =\dfrac{m}{n}$ if $m>n$. 
\begin{figure}[H]\label{pic3}\footnotesize 
	\begin{center}
		\hskip 0.5 cm\includegraphics[scale=0.35]{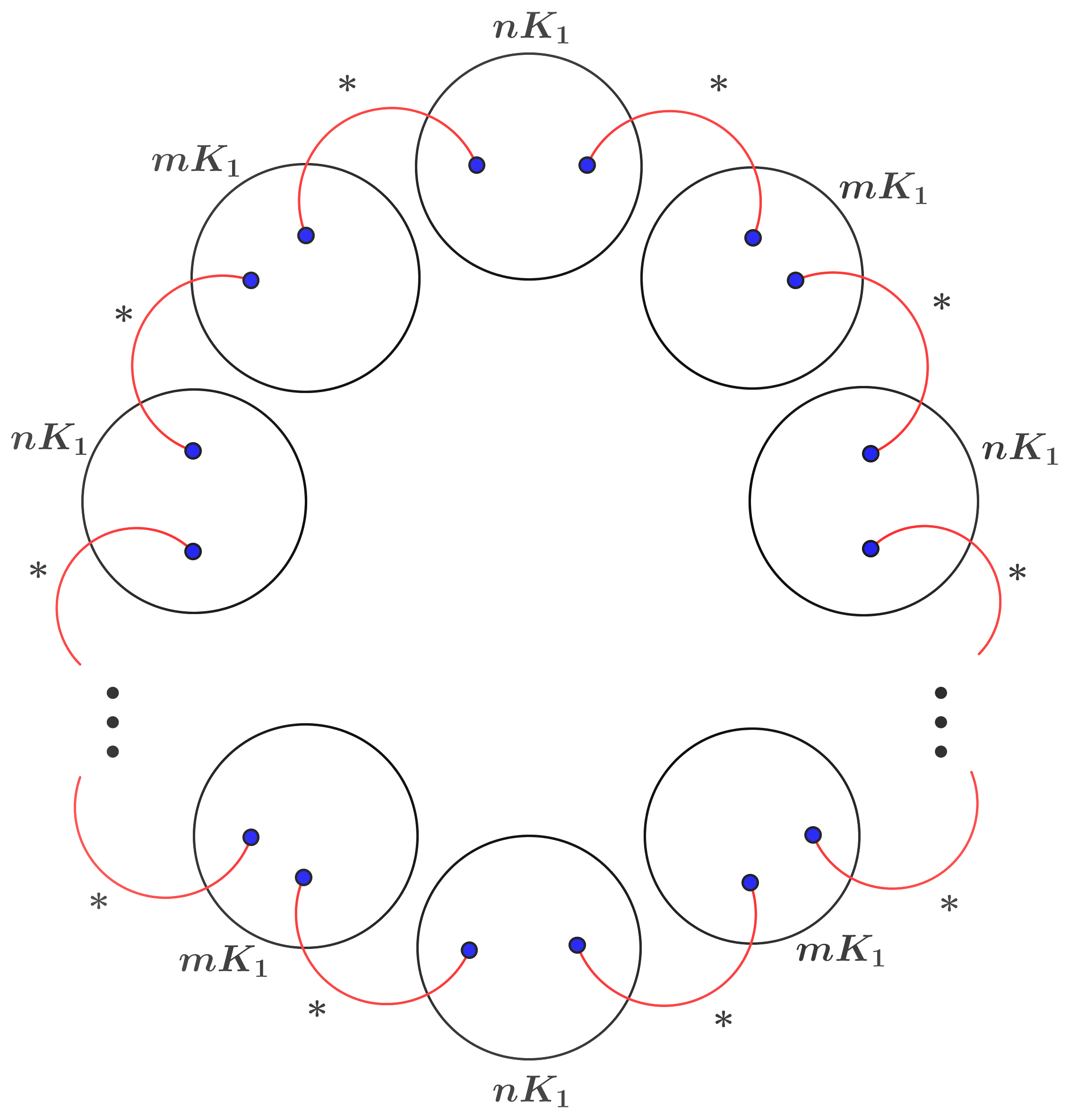}
	\end{center}
	\caption{\footnotesize Graph $ G_{3}$ with diameter $ d$.  \normalsize}
\end{figure}\normalsize

In Figure 6, we exemplify the method illustrated above for 
$$ G_{4}= \overline{K_{1} * 2K_{1} * K_{1}* 2K_{1} * K_{1}* 2K_{1}* K_{1}* 2K_{1}},$$
$$ G_{5}= \overline{2K_{1} * 3K_{1} * 2K_{1}* 3K_{1} * 2K_{1}* 3K_{1}}$$
as  quasi-$\frac{7}{5} $-DB and  quasi-$ \frac{8}{7} $-DB graphs, respectively.

\begin{figure}[H]\label{pic4}\footnotesize 
	\begin{center}
		\hskip 0.5 cm\includegraphics[scale=0.25]{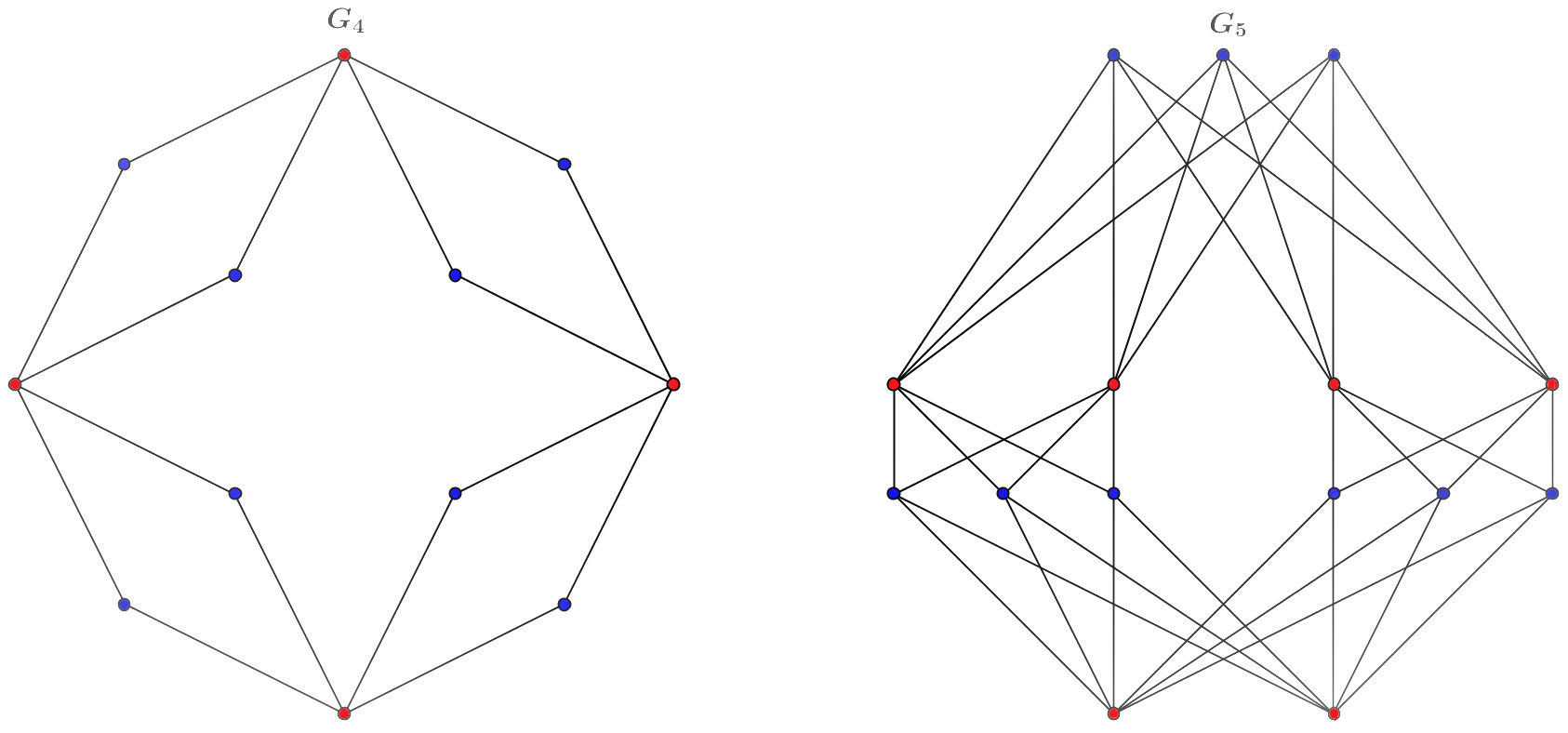}
	\end{center}
	\caption{\footnotesize Graphs $ G_{4} $ with diameter $ d=4$ and $ G_{5} $ with diameter $ d=3$.  \normalsize}
\end{figure}\normalsize
\begin{remark}
Considering the two methods above we see that the structured graphs are biregular bipartite. The quasi-$\frac{n_{2}+k}{n_{1} + m} $-DB graph $ H(m, G, k) $ in Proposition \ref{pro1} is an ($n_{1},n_{2} $)-biregular bipartite. 
Further, the quasi-$\dfrac{m}{n} $-DB graph $ \underbrace{\overline{mK_{1} * nK_{1} * mK_{1}*\cdots * mK_{1}* nK_{1}}}_{2d\textrm{-times}}$, with diameter $ d$, $ m>n $, is a ($2m,2n $)-biregular bipartite.
\end{remark}

\begin{problem}\label{pr1}
For $ m \neq n $, is there any ($m, n $)-biregular bipartite graph with no quasi-$ \lambda$-DB property?
\end{problem}
We know that any connected edge-transitive graph which is not DB, is a quasi-$ \lambda$-DB graph (see \cite{R1}) but how about the converse:
\begin{problem}\label{pr2}
Is there any  quasi-$ \lambda$-DB graph without edge-transitivity?
\end{problem}
If the answer of Problem \ref{pr2} is negative then by the fact that every edge-transitive graph (disallowing graphs with isolated vertices) that is not also vertex-transitive must be biregular, we find that quasi-$ \lambda$-DB graphs are exactly the 
biregular bipartite graphs which also solves the Problem \ref{pr1}. We also note that quasi-$ \lambda$-DB graphs are not  
vertex-transitive, since any graph with vertex-transitivity must be distance-balanced.

\section{Quasi-($\lambda,n $)-distance-balanced graphs}
Throughout this section, we define a new class of graphs which is considered as a generalization of quasi-$ \lambda$-DB graphs.
For $ a,b \in V(G) $ with $ d(a,b)=n $, let $ W_{a\underline{n}b}^{G} $ and 
$ \underaccent{\mathlarger{a} \ \underline{\mathlarger{n}} \ \mathlarger{b}}{W}^{G} $ 
be the sets of vertices as given in the first section.
Then $ G $ is called \textit{quasi-$(\lambda,n) $-distance-balanced} graph, 
in which either
$ \vert W_{a\underline{n}b}^{G}\vert=\lambda\vert W_{b\underline{n}a}^{G} \vert$ or 
$ \vert W_{b\underline{n}a}^{G}\vert=\lambda\vert W_{a\underline{n}b}^{G} \vert$, for some positive
rational number $ \lambda>1 $. 
\\

In the following using the graph operation $ * $ and the complete graphs we present a class of quasi-$(\lambda,n) $-distance-balanced graphs for any $ n $.

\begin{figure}[H]\label{pic7}\footnotesize 
	\begin{center}
		\hskip 0.5 cm\includegraphics[scale=0.35]{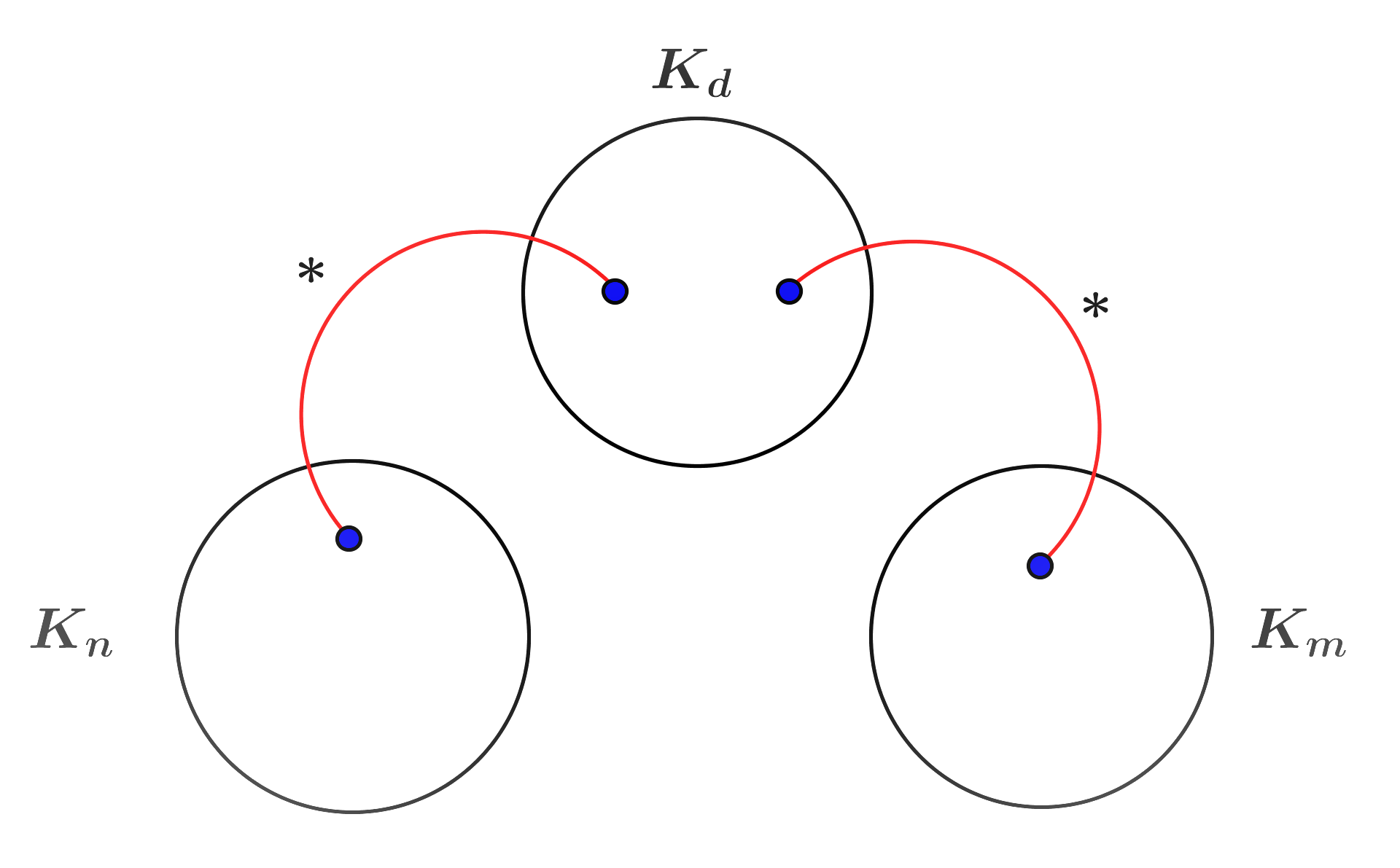}
	\end{center}
	\caption{\footnotesize  Quasi-$(\frac{n}{m},2)$-DB graph $ K_{n} *  K_{d} *  K_{m} $ with $ n>m $.  \normalsize}
\end{figure}\normalsize

In Figure 8, the graphs $ K_{n}, K_{d}$ and $ K_{m} $ with $ n>m $, are joint together and has a common edge created by the black nodes. 
This graph can be also represented by $ G=K_{n-2} *  K_{2} * K_{d-4} * K_{2} *  K_{m-2} $. 
Moreover, for any $ x\in  K_{n} $ and $ y\in  K_{m} $ with $ d(x,y)=3 $ we get 
$ \vert W_{x\underline{3}y}^{G}\vert =n $ and $ \vert W_{y\underline{3}x}^{G}\vert =m. $

\begin{figure}[H]\label{pic8}\footnotesize 
	\begin{center}
		\hskip 0.5 cm\includegraphics[scale=0.15]{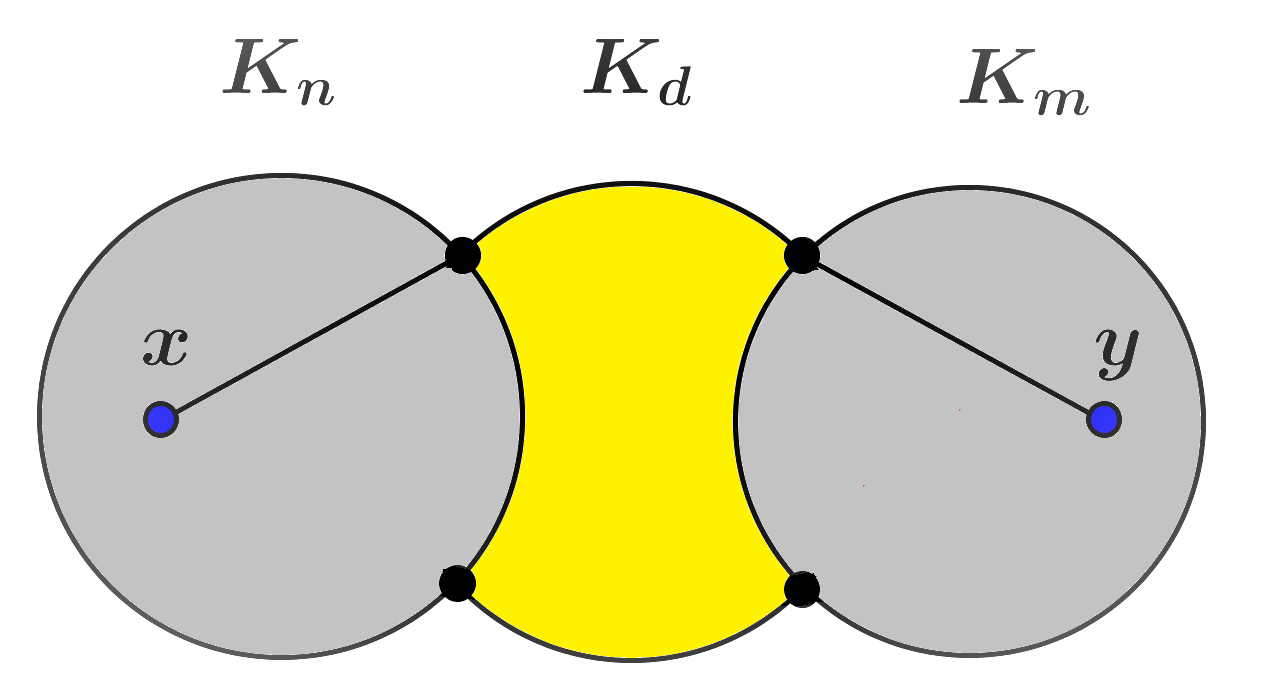}
	\end{center}
	\caption{\footnotesize  Quasi-$(\frac{n}{m},3)$-DB graph $ K_{n-2} *  K_{2} * K_{d-4} * K_{2} *  K_{m-2} $ with $ n>m $.  \normalsize}
\end{figure}\normalsize

  Figure 9 shows another quasi-$(\frac{n}{m},3)$-DB graph which is a cycle-shaped graph. 

\begin{figure}[H]\label{pic9}\footnotesize 
	\begin{center}
		\hskip 0.5 cm\includegraphics[scale=0.10]{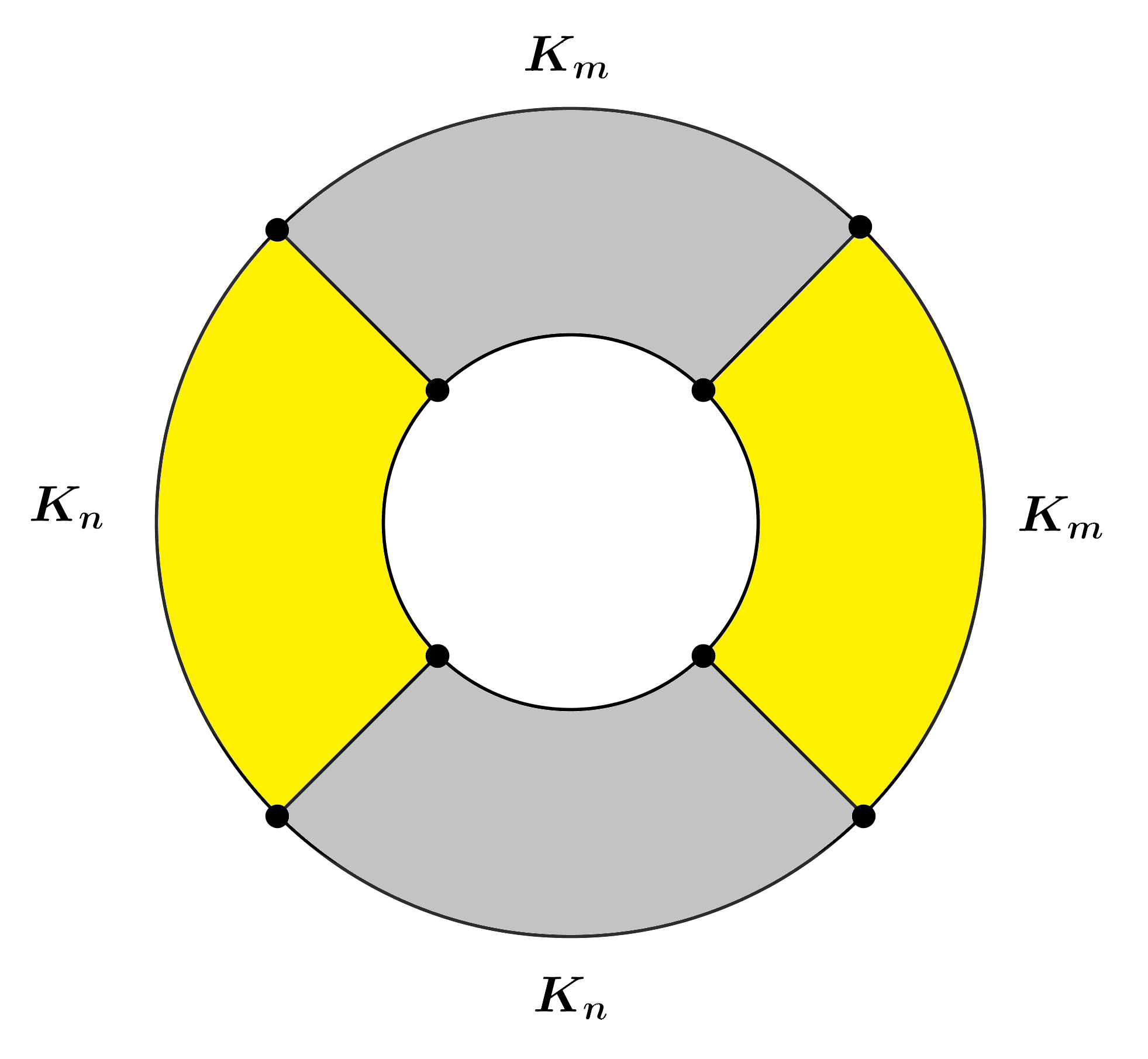}
	\end{center}
	\caption{\footnotesize  Quasi-$(\frac{n}{m},3)$-DB graph 
		$ \overline{K_{m-4} *  K_{2} * K_{m-4} * K_{2} *  K_{n-4} * K_{2} *  K_{n-4} * K_{2}} $
	  with $ n>m $.  \normalsize}
\end{figure}\normalsize


In Figure 10, for any $ x\in  K_{n} $ and $ y\in  K_{m} $ with $ d(x,y)=4 $ we have
$$ \vert W_{x\underline{4}y}^{G}\vert =2m+n-8, \quad      \vert W_{y\underline{4}x}^{G}\vert =2n+m-8, $$
where 
$$ G=\overline{K_{m-4} *  K_{2} * K_{n-4} * K_{2} *  K_{m-4} * K_{2} *  K_{n-4} * K_{2}
*  K_{m-4} * K_{2} **  K_{n-4} * K_{2}}, \quad n>m. $$

\begin{figure}[H]\label{pic10}\footnotesize 
	\begin{center}
		\hskip 0.5 cm\includegraphics[scale=0.10]{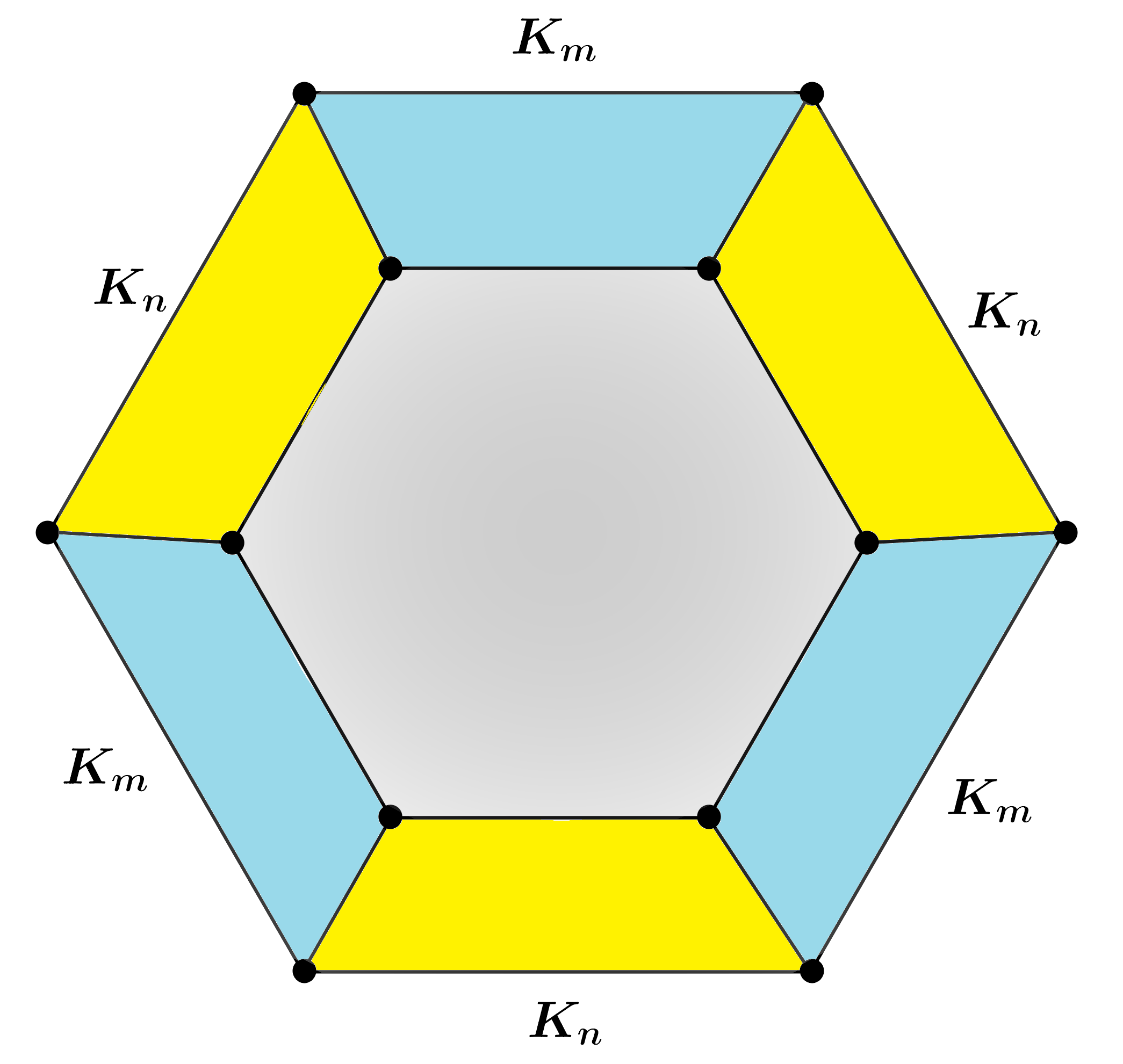}
	\end{center}
	\caption{\footnotesize  Quasi-$(\frac{2n+m-8}{2m+n-8},4)$-DB graph $ G $ for $ n>m $.  \normalsize}
\end{figure}\normalsize
 
  Inspired by the Figure 10, we easily create 
 the quasi-$(\lambda,2k)$-DB graph by joining the subgraphs $K_{n} $ and $  K_{m}$, alternately, where 
 $$ \lambda= \frac{kn+(k-1)m-4k}{km+(k-1)n-4k}. $$
 \\
 
Depicted by Figure 11, the graph $ G $ includes four subgraphs $ K_{p} $ which are connected by the operation $ * $ and 
 for any $ x\in  K_{n} $ and $ y\in  K_{m} $ with $ d(x,y)=5 $ we have
 $$ \vert W_{x\underline{5}y}^{G}\vert =2p+n-4, \quad      \vert W_{y\underline{5}x}^{G}\vert =2p+m-4. $$
 
 \begin{figure}[H]\label{pic11}\footnotesize 
 	\begin{center}
 		\hskip 0.5 cm\includegraphics[scale=0.30]{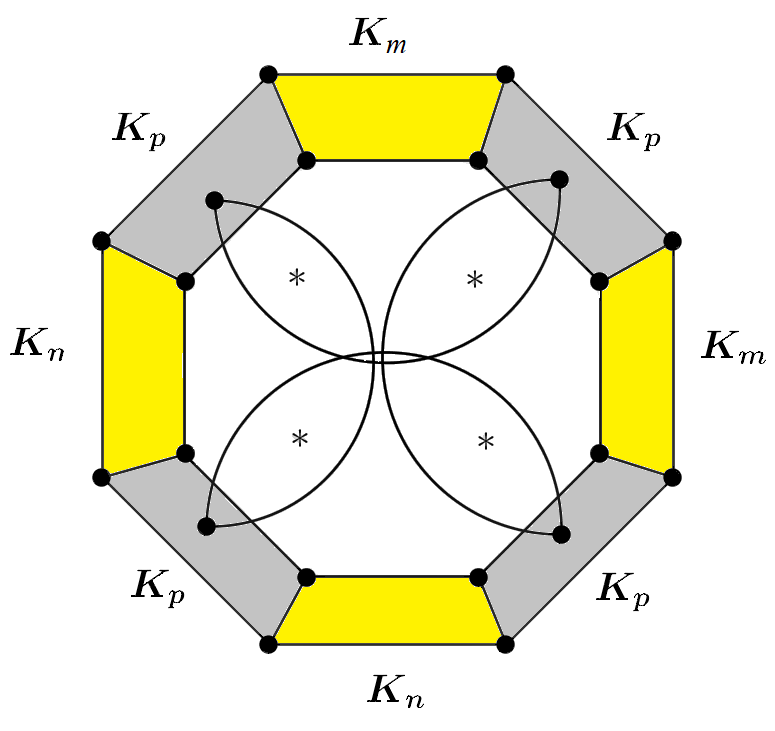}
 	\end{center}
 	\caption{\footnotesize  Quasi-$(\frac{2p+n-4}{2p+m-4},5)$-DB graph $ G $ for $ n>m $.  \normalsize}
 \end{figure}\normalsize
 
 In Figure 12, using four local groups of subgraphs $ K_{p} $ including $ k-1 $ graphs $ K_{p} $ connected by the operation 
 $ * $ together with $K_{n} $ and $  K_{m}$ we have  a quasi-$( \lambda,2k+1)$-DB graph $ G $ 
 for 
 $$\lambda =\frac{2(k-1)n+m-4(k-1)}{2(k-1)m+n-4(k-1)}. $$

 \begin{figure}[H]\label{pic12}\footnotesize 
	\begin{center}
		\hskip 0.5 cm\includegraphics[scale=0.12]{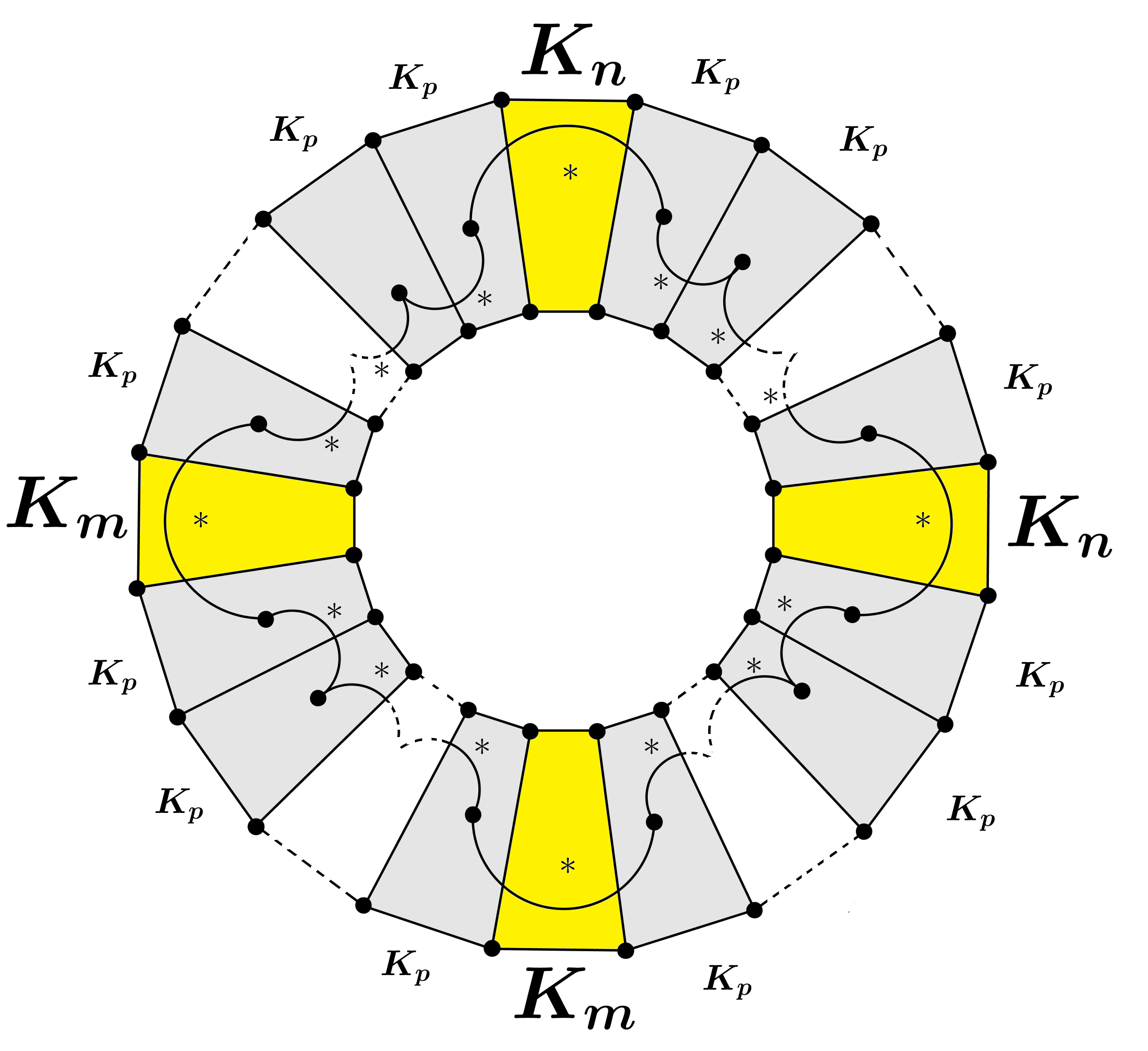}
	\end{center}
	\caption{\footnotesize  Quasi-$(\frac{2(k-1)n+m-4(k-1)}{2(k-1)m+n-4(k-1)},2k+1)$-DB graph $ G $ for $ n>m $.  \normalsize}
\end{figure}\normalsize
 
 \begin{proposition}
 	Let $ G $ be a connected quasi-$(\lambda,2)$-DB graph. If $ \delta(G) = 1 $, then $ G $ is isomorphic to 
 	a complete graph with some pendant vertices with no same root. 
 \end{proposition}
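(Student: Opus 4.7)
The plan is to exploit the pendant vertex guaranteed by $\delta(G) = 1$ to pin down the local structure around it, and then propagate this to the global conclusion that $G$ is a clique with pendants attached at pairwise distinct vertices.

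First I would fix a pendant $u$ with its unique neighbor $v$ and show that no other pendant can have $v$ as its root. If $u'$ were another pendant attached to $v$, then $d(u, u') = 2$ and for every $x \neq u, u'$ the relation $d(x, u) = d(x, v) + 1 = d(x, u')$ (using that $u$ and $u'$ each have $v$ as their sole neighbor) shows that every such $x$ is equidistant, forcing $W_{u \underline{2} u'}^G = \{u\}$ and $W_{u' \underline{2} u}^G = \{u'\}$; this gives $\lambda = 1$, contradicting $\lambda > 1$. Hence the pendant-to-root map is injective.

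Second, still with $u$ and $v$ fixed, I would take any $w \in N(v) \setminus \{u\}$ and note $d(u, w) = 2$. A direct computation, using $d(x, u) = d(x, v) + 1$ for $x \neq u$ together with $d(x, w) \leq d(x, v) + 1$, shows $W_{u \underline{2} w}^G = \{u\}$. Thus the quasi-$(\lambda, 2)$-DB property gives the uniform equality $|W_{w \underline{2} u}^G| = \lambda$ over all such $w$, which translates into a uniform count of vertices $x \neq u$ satisfying $d(x, w) \leq d(x, v)$ independent of $w \in N(v) \setminus \{u\}$. This will be the key local lever.

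Third, I would combine this uniform local constraint with the quasi-$(\lambda, 2)$-DB condition applied to suitable auxiliary pairs to establish that (a) every non-pendant of $G$ is adjacent to $v$, and (b) the non-pendants of $G$ induce a clique. For (a), if a non-pendant $z$ had $d(v, z) \geq 2$, I would look at an edge along a shortest $v$-$z$ path and test the quasi-$(\lambda, 2)$-DB condition on a suitable pair at distance two, deriving a ratio inconsistent with the uniform value from Step 2. For (b), two non-adjacent non-pendants $a, b$ would necessarily satisfy $d(a,b) = 2$; a direct computation of $W_{a \underline{2} b}^G$ and $W_{b \underline{2} a}^G$ using the now-established local structure together with the injective-root property from Step 1 would force equality of these sizes, yielding $\lambda = 1$ once more, a contradiction. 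Combined with Step 1, this yields the claimed description of $G$ as $K_n$ with pendants attached at pairwise distinct vertices.

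The hard part will be Step 3. The quasi-$(\lambda, 2)$-DB condition is strictly local (governing only pairs at distance exactly two), so turning local uniformity into a global clique requires delicate case analysis. The subtle case is that of non-pendant vertices that are not themselves roots of any pendant: these are not directly constrained by the pendant-based analysis of Step 2, and the auxiliary pairs at distance two must be chosen carefully in order to extract the required adjacencies without collapsing to $\lambda = 1$.
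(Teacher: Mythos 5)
Your Steps 1 and 2 are correct and coincide with the paper's opening moves: two pendants on a common root would give $\lambda=1$, and a pendant $u$ with root $v$ gives $W_{u\underline{2}w}^{G}=\{u\}$ for every $w\in N(v)\setminus\{u\}$, hence $|W_{w\underline{2}u}^{G}|=\lambda$. The genuine gap is Step 3, which you correctly identify as the hard part but leave as a plan rather than an argument. The idea you are missing is quantitative: the paper does not merely record that $|W_{w\underline{2}u}^{G}|$ is a uniform value $\lambda$, it identifies that value as $|V(G)|-2$, i.e.\ $\lambda$ is essentially the order of the graph. That magnitude is the lever for globalization: for \emph{any} pair $a,b$ with $d(a,b)=2$ one has $|W_{a\underline{2}b}^{G}|+|W_{b\underline{2}a}^{G}|\le |V(G)|$, so the condition $\max=\lambda\cdot\min$ forces $(\lambda+1)\cdot\min\le |V(G)|$ and hence $\min=1$ once $\lambda$ is of order $|V(G)|$. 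Thus for every distance-$2$ pair one of the two $W$-sets is a singleton, which is what allows one to conclude that one of the two vertices is a pendant, and therefore that the vertices remaining after deleting all pendants pairwise have distance $1$, i.e.\ induce a complete graph. No delicate choice of auxiliary pairs is needed once this is in hand.

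Your Step 2, by contrast, yields only the upper bound $\lambda\le |V(G)|-2$ (since $u$ and $v$ both lie outside $W_{w\underline{2}u}^{G}$) and no lower bound, so the uniform count over $N(v)$ carries no force against a distance-$2$ pair of non-pendant vertices far from $v$: your proposed case analysis in Step 3(a)--(b) has nothing to contradict. To close the gap you must show that every vertex $t\notin\{u,v\}$ satisfies $d(t,w)\le d(t,v)$, i.e.\ that $W_{w\underline{2}u}^{G}=V(G)\setminus\{u,v\}$; this is the one computation on which the whole proof turns (and, to be fair, the one the paper itself states rather tersely). Without it, the proposal as written does not reach the conclusion.
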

 
 \begin{proof} 
 	For the connected quasi-$(\lambda,2)$-DB graph  $ G $ with $ \delta(G) = 1 $ suppose that 
 	$ P:xyz $ is a path in graph $ G $ with $ \text{deg}(x)=1 $, $ d(x,z)=2 $ and  $ x,y,z\in V(G). $ 
 	It is clear that 
 	$ \vert W_{x\underline{2}z}^{G}\vert =1, \  \lambda=\vert W_{z\underline{2}x}^{G}\vert= |V(G) |-2. $ 
 	Now, for any pair of vertices $ u,v $ in $ G $ with $ d(u,v)=2 $ we have 
 	$$   \vert W_{u\underline{2}v}^{G}\vert= \lambda \vert W_{v\underline{2}u}^{G}\vert 
 	\quad \text{or} \quad \vert W_{v\underline{2}u}^{G}\vert= \lambda \vert W_{u\underline{2}v}^{G}\vert.  $$
 	It means that $ \vert W_{v\underline{2}u}^{G}\vert =1 $ or $ \vert W_{u\underline{2}v}^{G}\vert =1, $ 
 	that is, $ u $ or $ v $ is pendant vertex but not both. (Note that if $ \text{deg}(u)= \text{deg}(v)=1   $ 
 	then they both have the same root, that is, they are adjacent to a unique vertex in $ G  $ and it 
 	contradicts to the fact that  $ G $ is quasi-$(\lambda,2)$-DB.)
 	This also shows that it only remains a graph with diameter 1  if we remove the pendant vertices of $ G $. 
 	Therefore, $ G $ is a complete graph with some pendant vertices with no same root (see Figure 13).
 \end{proof}

 \begin{figure}[H]\label{pic13}\footnotesize 
	\begin{center}
		\hskip 0.5 cm\includegraphics[scale=0.25 ]{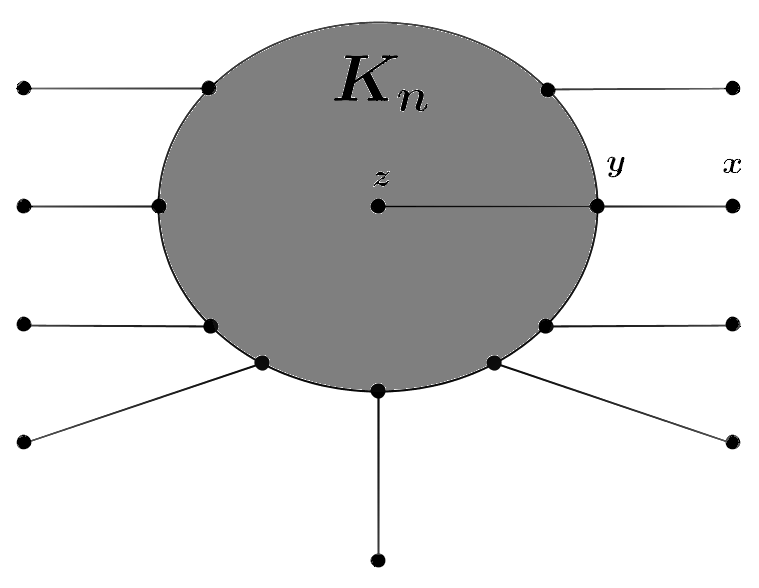}
	\end{center}
	\caption{\footnotesize  Graph $ G $ formed by a complete graph with some pendant vertices attached.  \normalsize}
\end{figure}\normalsize

\end{document}